\newtheorem{Theorem}{Theorem}
\newtheorem{Definition}[Theorem]{Definition}
\newtheorem{Remark}[Theorem]{Remark}
\newtheorem{Lemma}[Theorem]{Lemma}
\newtheorem{Corollary}[Theorem]{Corollary}
\newcommand{\spa}{\mbox{span}}
\newcommand{\F}{F\langle X\rangle}
\newcommand{\Ft}{F\langle X, \mbox{Tr}\rangle}
\newcommand{\V}{\mbox{var}}
\DeclareMathOperator{\Id}{Id}
\newcommand{\tr}{\mbox{tr}}
\newcommand{\Tr}{\mbox{Tr}}
\begin{document}
\title[Matrix algebras with degenerate traces and trace identities]{Matrix algebras with degenerate traces and trace identities}

\author{Antonio Ioppolo}
\address{IMECC, UNICAMP, S\'ergio Buarque de Holanda 651, 13083-859 Campinas, SP, Brazil}
\email{ioppolo@unicamp.br}

\thanks{A. Ioppolo was supported by the Fapesp post-doctoral grant number 2018/17464-3}

\author{Plamen Koshlukov}
\address{IMECC, UNICAMP, S\'ergio Buarque de Holanda 651, 13083-859 Campinas, SP, Brazil}
\email{plamen@unicamp.br}

\thanks{P. Koshlukov was partially supported by CNPq grant number 302238/2019-0 and by FAPESP grant number 2018/23690-6}

\author{Daniela La Mattina}
\address{Dipartimento di Matematica e Informatica, Universit\`a degli Studi di Palermo, Via Archirafi 34, 90123, Palermo, Italy}
\email{daniela.lamattina@unipa.it}

\thanks{D. La Mattina was partially supported by GNSAGA-INDAM}

\subjclass[2010]{16R10, 16R30, 16R50}

\keywords{Trace algebras, polynomial identities, diagonal matrices, degenerate traces, Stirling numbers}

\begin{abstract}
In this paper we study matrix algebras with a degenerate trace in the framework of the theory of polynomial identities.
The first part is devoted to the study of the algebra $D_n$ of $n \times n$ diagonal matrices. 
We prove that, in case of a degenerate trace, all its trace identities follow by the commutativity law and by pure trace identities.  Moreover  we relate the trace identities of $D_{n+1}$ endowed with a degenerate trace, to those  of $D_n$ with the corresponding trace. This allows us to determine the generators of the trace T-ideal of $D_3$.

In the second part we study commutative subalgebras of $M_k(F)$, denoted by $C_k$ of the type $F + J$ that can be endowed with the so-called strange traces: $\tr(a+j) = \alpha a + \beta j$, for any $a+j \in C_k$, $\alpha$, $\beta \in F$. Here $J$ is the radical of $C_k$. In case $\beta = 0$ such a trace is degenerate, and we study the trace identities satisfied by the algebra $C_k$, for every $k \geq 2$. Moreover we  prove that these algebras generate the so-called minimal varieties of polynomial growth. 

In the last part of the paper,  devoted to the study of varieties of polynomial growth, we completely classify the subvarieties of the varieties of algebras  of almost polynomial growth introduced in (\cite{IoppoloKoshlukovLaMattina2021}).
\end{abstract}

\maketitle

\section{Introduction}

All algebras and vector spaces we consider will be associative and over a fixed field $F$ of characteristic zero.
The aim of this paper is to present several results of the theory of polynomial identities in the setting of algebras with trace. The theory of trace identities, which is strictly related to the invariant theory of $n \times n$ matrices, represents an important area of the modern mathematics. The main contributions to this field  are given by the results of Procesi and Razmyslov obtained independently in \cite{Procesi1976, Razmyslov1974}. Here we want to highlight how the theory of trace identities contributed decisively to the development of many areas of the theory of PI-algebras (algebras satisfying at least one non-trivial polynomial identity).

In $1972$ Regev introduced the sequence of codimensions $c_n(A)$, $n=1, 2, \dots$, of an associative algebra $A$ (\cite{Regev1972}).  Recall
that if $P_n$ is the vector space of multilinear polynomials in the
non-commutative variables $x_1$, \dots, $x_n$ and $\Id(A)$ is the T-ideal  of identities of $A$ then $c_n(A)=\dim P_n/(P_n\cap \Id(A))$. The codimension sequence is one of the most important numerical invariants of an ideal of identities. It is well known that when the base field is of characteristic 0, every ideal of identities $I=\Id(A)$ is generated by its multilinear elements, that is by the intersections $P_n\cap I$, $n\ge 1$. But $P_n$ is a left module over the symmetric group $S_n$ and it is isomorphic to the left regular module $FS_n$. Ideals of identities are closed under permutations of the variables hence $P_n\cap I$ is a submodule and, so, $P_n(A)=P_n/(P_n\cap I)$ is an $S_n$-module as well. In characteristic 0 one can employ the well developed theory of representations of $S_n$ in order to study polynomial identities, and this constitutes one of the principal methods of studying PI-algebras. 

The precise knowledge of the identities satisfied by an algebra is an extremely hard task; a fruitful approach to obtain some information about them is through the study of the corresponding codimensions. 
Nevertheless computing explicitly the codimension sequence of an algebra is also very difficult. Indeed there are only few examples of algebras whose codimension sequence is known explicitly.  It is more feasible to study the asympotics of such a sequence. The key result in this area says that  the codimension sequence of a PI-algebra is exponentially bounded (\cite{Regev1972}) and its exponential growth is an integer (see \cite{GZbook}). More precisely, Regev in \cite{Regev1972} proved that if $A$ satisfies an identity of degree $d$ then $c_n(A)\le (d-1)^{2n}$ for every $n$. This comes to justify why studying $P_n(A)$ should be ``easier'' than $P_n\cap \Id(A)$: the exponential function grows much slower than $n!$, thus in a sense, for $n$ large enough, almost ``all'' of $P_n$ will be identities for $A$. The theorem of Giambruno and Zaicev, see their monograph \cite{GZbook}, states that in characteristic 0, if $A$ is a PI-algebra then the limit $\lim_{n\to\infty} c_n(A)^{1/n}$ always exists and is an integer. This limit is called the PI-exponent of $A$, and is denoted by $\exp(A)$. A similar result holds for large classes of non-associative algebras, and also for algebras with an additional structure: group graded algebras, algebras with involution. Giambruno and Zaicev's theorem became the starting point of an extensive research towards classification of the PI-algebras according to the growth of their codimensions.

In case $A$ is an algebra with trace, as in  the ordinary case, one defines trace  identities, multilinear trace polynomials and trace codimension sequence. The asymptotic behaviour of the trace codimensions of the matrix algebra was studied by Regev: in \cite{Regev1984} he proved that the ordinary and the trace codimensions of the full matrix algebra are asymptotically equal. 

Recently Berele proved in \cite{Berele2020} that the sequence of trace codimensions of a PI-algebra is exponentially bounded if and only if the algebra satisfies  a particular trace identity which he called an inter-trace permuting identity. 

In \cite{IoppoloKoshlukovLaMattina2021} the authors gave a characterization of  the varieties of algebras with trace of polynomial growth, that is varieties  generated by algebras whose sequence of codimensions is polynomially bounded. The varieties of algebras whose codimensions grow like polynomial functions have been object of extensive research, the interested reader can find a wealth of results in that direction in \cite[Chapter 7]{GZbook} and the references therein.

In this paper we focus our attention on matrix algebras. The algebra $M_n(F)$ of $n \times n$ matrices over the field $F$ represents the most well-known and widely used example of an algebra endowed with a trace function. The usual trace of a matrix is defined as the sum of all its elements on the main diagonal. It is known and easy to show that every trace on $M_n(F)$ is proportional to the usual one and it is always non-degenerate. 

In their celebrated theorem, Procesi and Razmyslov showed that all the trace identities of the full matrix algebra are consequences of just one element: the Cayley-Hamilton polynomial. We notice that such a result has no analogue in the ordinary case: while generators of the ideals of identities for $M_n(F)$, $n\le 2$, are known, even for $3 \times 3$ matrices we have no idea what the generators of the ideal of identities look like. In fact, the theorem of Procesi and Razmyslov is one of the most general results in PI-theory.

Now let us consider the subalgebra of $M_n(F)$ of diagonal matrices, denoted as $D_n$. In sharp contrast with the situation for full matrix algebras, there are very many traces that one can define on $D_n$. In case $D_n$ is endowed with the usual trace, its identities have been completely described by Berele in \cite{Berele1996}. In fact, the commutativity of such an algebra implies that a trace on it is just a linear map $D_n \to F$. As  $D_n\cong F^n$ then clearly the dual of the $n$-dimensional vector space $F^n$ gives us all traces on $D_n$. 

The first part of this paper is devoted to the study of diagonal matrix algebras endowed with a degenerate trace.  Our first result shows that, in order to study the trace identities of such  algebras, it is sufficient to consider only pure trace polynomials, that is polynomials in which all variables appear inside traces. 
Then we prove that the trace identities of  $D_{n+1}$  endowed with a degenerate trace, are very closely related to those of $D_n$ with the  trace which is in a sense the restriction of the trace of $D_{n+1}$ to $D_n$. As a consequence,  starting from the trace identities of $D_2$, determined in \cite{IoppoloKoshlukovLaMattina2021,  IoppoloKoshlukovLaMattina2021proc},  we  find the generators of the trace $T$-ideal of the identities of  $D_3$ endowed with all possible degenerate traces. Along the way we also compute the trace codimension sequence of these algebras. It turns out they are expressed in terms of  Stirling numbers of the second kind. 

In the second part of the paper we study certain commutative subalgebras of $M_k(F)$, denoted by $C_k$, of the type $F + J$ where $J$ is the radical of $C_k$. These subalgebras can be viewed as the quotient algebras $F[x]/(x^k)$ in an obvious manner. For every $\alpha$, $\beta \in F$, we define the trace functions $t_{\alpha, \beta}\colon C_k \to F$ as $t_{\alpha, \beta}(a+b)= \alpha a + \beta b$, for every $a\in F$, $b \in J$. Such traces have been called strange traces by Procesi  (\cite{Procesi2008}).
Here we compute explicitly the generators of the trace T-ideal of $C_2$ endowed with the trace $t_{\alpha, \beta}$, for each choice of $\alpha$,  $\beta$. In case $\beta=0$ a complete list of generators is given also for the ideal of trace identities of $C_k$ for any $k$.
In case of degenerate trace, we prove that the algebras $C_k$ generate minimal varieties of polynomial growth, that is varieties  of polynomial growth with the property that every proper subvariety has a smaller degree of its polynomial growth.

The last part of the paper is devoted to the study of varieties of polynomial growth.  We completely classify the subvarieties generated by unitary finite dimensional algebras of the varieties of algebras  of almost polynomial growth (\cite{IoppoloKoshlukovLaMattina2021}). Recall that these are varieties whose codimensions grow exponentially but for each proper subvariety they grow polynomially.

\section{Preliminaries}

Throughout this paper $F$ will denote a field of characteristic zero and $A$ a unitary associative $F$-algebra with trace $\tr$. We say that $A$ is an algebra with trace if it is endowed with a linear map $\tr\colon A \to F$ such that for all $a$, $b \in A$ one has
\[
\tr(ab) = \tr(ba).
\]
In what follows, we shall identify, when it causes no misunderstanding, the element $\alpha \in F$ with $\alpha \cdot 1$ where 1 is the unit of the algebra, that is we assume $F=F\cdot 1\subseteq A$. 

Accordingly, one can construct $\Ft$, the free algebra with trace on the countable set $X = \{ x_1, x_2, \ldots \}$ of free generators where $\Tr$ is a formal trace. Let $\mathcal{M}$ denote the set of all monomials in the elements of $X$. Then $\Ft$ is the algebra generated by the free algebra $\F$ together with the set of central (commuting) variables $\Tr(M)$, $M \in \mathcal{M}$, subject to the conditions that $\Tr(MN) = \Tr(NM)$,
and $\Tr(\Tr(M)N)=\Tr(M)\Tr(N)$, for all $M$, $N \in \mathcal{M}$. In other words, 
\[
\Ft \cong \F \otimes F[\Tr(M) \mid M \in \mathcal{M}].
\]
The elements of the free algebra with trace are called trace polynomials. 

A trace polynomial $f(x_1, \ldots, x_n, \Tr) \in \Ft$ is a trace identity for $A$ if, after substituting the variables $x_i$ with arbitrary elements $a_i \in A$ and $\Tr$ with the trace $\tr$, we obtain 0. We denote by $\Id^{tr}(A)$ the set of trace identities of $A$, which is a trace $T$-ideal ($T^{tr}$-ideal) of the free algebra with trace. In other words it is an ideal which is invariant under all endomorphisms of $\Ft$.

As in the ordinary case, $\Id^{tr}(A)$ is completely determined by the  multilinear polynomials it contains.

\begin{Definition}
The vector space of multilinear elements of the free algebra with trace in the first $n$ variables is called the space of multilinear trace polynomials in $x_1$, \dots, $x_n$. It is denoted by $MT_n$ ($MT$ comes from \textsl{mixed trace}). Its elements are linear combinations of expressions of the type
\[
\Tr(x_{i_1} \cdots x_{i_a}) \cdots \Tr(x_{j_1} \cdots x_{j_b}) x_{l_1} \cdots x_{l_c}
\]
where $	\left \{ i_1, \ldots, i_a, \ldots, j_1, \ldots, j_b, l_1, \ldots, l_c \right \} = \left \{ 1, \ldots, n \right \} $.
\end{Definition}

The non-negative integer
\[
c_n^{tr}(A) = \dim_F \dfrac{ MT_n}{MT_n \cap \Id^{tr}(A)} 
\]
is called the $n$-th trace codimension of $A$. It is bounded from above by  $(n+1)!$, that is  the dimension of the vector space  $MT_n$.

A prominent role among the elements of $MT_n$ is played by the so-called pure trace polynomials, i.e., polynomials such that all the variables $x_1$, \dots, $x_n$ appear inside traces.

\begin{Definition}
The vector space of multilinear pure trace polynomials in $x_1$, \dots, $x_n$ is spanned by the elements
\[
PT_n = \spa_F 	 \{ \Tr(x_{i_1} \cdots x_{i_a}) \cdots \Tr(x_{j_1} \ldots x_{j_b}) \mid  \{ i_1, \ldots, j_b  \} =  \{ 1, \ldots, n  \}  \}. 
\]
\end{Definition}

\section{Traces on matrix algebras}

In this section we collect some basic facts and results concerning matrix algebras endowed with a trace function. 

The algebra $M_n(F)$ of the $ n \times n $ matrices over $F$ represents one of the most prominent examples of an algebra with trace. The usual trace on such an algebra, we denote it as $t_1$, is defined for every $a \in M_n(F)$ as 
\[
t_1(a) = t_1  \begin{pmatrix} a_{11} & \cdots & a_{1n} \\ \vdots &
\ddots & \vdots \\ a_{n1} & \cdots &
a_{nn} \end{pmatrix}  = a_{11} + \cdots + a_{nn} \in F.
\]
When necessary we shall consider the trace assuming its values in the algebra by identifying $F$ and $FI_n$ where $I_n$ is the identity matrix (or more generally the unit of our algebra).
The following fact is well known and an easy exercise in elementary linear algebra. It shows that every trace on $M_n(F)$ is proportional to the usual one. 

\begin{Lemma} \label{traces on matrices}
Let $f\colon M_n(F) \to F$ be a trace. Then there exists $\alpha \in F$ such that $f = \alpha t_1$.
\end{Lemma}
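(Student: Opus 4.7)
The plan is to evaluate the trace $f$ on the matrix units $e_{ij}$ (where $e_{ij}e_{kl} = \delta_{jk}e_{il}$) and then extend by linearity, exploiting the fact that the matrix units form a basis of $M_n(F)$.

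First I would handle the off-diagonal units. For $i \neq j$, write $e_{ij} = e_{ij}e_{jj}$; then the trace property gives
\[
f(e_{ij}) = f(e_{ij}e_{jj}) = f(e_{jj}e_{ij}) = f(0) = 0,
\]
since $e_{jj}e_{ij} = 0$ when $i \neq j$. So $f$ vanishes on every off-diagonal matrix unit.

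Next I would show that $f$ takes the same value on all diagonal matrix units. For any $i \neq j$,
\[
f(e_{ii}) = f(e_{ij}e_{ji}) = f(e_{ji}e_{ij}) = f(e_{jj}),
\]
so there is a common value $\alpha := f(e_{11}) = \cdots = f(e_{nn}) \in F$.

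Finally, for an arbitrary matrix $a = \sum_{i,j} a_{ij}e_{ij} \in M_n(F)$, linearity of $f$ gives
\[
f(a) = \sum_{i,j} a_{ij} f(e_{ij}) = \sum_{i=1}^{n} a_{ii}\,\alpha = \alpha\, t_1(a),
\]
which is the desired conclusion. There is no real obstacle here; the only point to watch is that the argument really uses only the trace property $f(xy)=f(yx)$ and $F$-linearity, which are exactly the hypotheses, so the proof goes through unchanged.
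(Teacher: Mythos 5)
Your argument is correct and complete: the computation on matrix units (off-diagonal units killed by $f(e_{ij}e_{jj})=f(e_{jj}e_{ij})=0$, diagonal units identified via $e_{ii}=e_{ij}e_{ji}$ and $e_{jj}=e_{ji}e_{ij}$) is the standard proof. The paper itself gives no proof, dismissing the lemma as a well-known elementary exercise, and your write-up is exactly the argument the authors have in mind.
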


Such traces on $M_n(F)$ are not very ``interesting'' from the PI-point of view: they can be obtained from the usual trace by a ``rescaling''. 

In sharp contrast with the above result, there are very many traces on the algebra $D_n = D_n(F)$ of $n \times n$ diagonal matrices over $F$. Indeed, since $D_n$ is commutative, every trace on it is just a linear map $D_n \rightarrow F$. The following remark illustrates this situation. 

\begin{Remark}\label{traces_on_Dn}
If $\tr$ is a trace on $D_n$ then there exist $\alpha_1$, \dots, $\alpha_n\in F$ such that, for any $a=\mbox{diag}(a_{11},\ldots, a_{nn})\in D_n$,
\[
\tr(a) = \alpha_1 a_{11}+\cdots+ \alpha_n a_{nn}.
\]
This means that if $D_n^*$ is the dual of the vector space $D_n$ then the traces on $D_n$ are in 1--1 correspondence with the elements of $D_n^*$.
\end{Remark}
We shall denote such a trace by $t_{\alpha_1, \ldots, \alpha_n}$. Furthermore  $D_n^{t_{\alpha_1, \ldots, \alpha_n}}$ will indicate the algebra $D_n$ with the trace $t_{\alpha_1, \ldots, \alpha_n}$. 

Let $(A, t)$ and $(B, t')$ be two algebras with trace. An isomorphism of algebras $\varphi\colon A \to B$ is said to be an isomorphism of algebras with trace if $\varphi(t(a)) = t'(\varphi(a))$ for every $a \in A$. In this case we write $A \cong_{tr} B$.

\begin{Remark} \label{isomorphic Dn}
Let $S_n$ be the symmetric group of order $n$ on the set $\{ 1,2, \ldots, n \}$. For every $\sigma \in S_n$ we have that
\[
D_n^{t_{\alpha_1, \ldots, \alpha_n}} \cong_{tr} D_n^{t_{\alpha_{\sigma(1)}, \ldots, \alpha_{\sigma(n)}}}.
\]
\end{Remark}
\begin{proof}
Let $e_{ii}$ be the diagonal matrix units, $i = 1$, \dots, $n$. The linear map $\varphi \colon D_n^{t_{\alpha_1, \ldots, \alpha_n}} \to D_n^{t_{\alpha_{\sigma(1)}, \ldots, \alpha_{\sigma(n)}}}$ is defined by $ \varphi(e_{ii}) = e_{\sigma(i) \sigma(i)}$, for all $i = 1$, \dots, $n$. It is an isomorphism of algebras, and, moreover, for every $i = 1$, \dots, $n$, we have that
\[
\varphi (t_{\alpha_1, \ldots, \alpha_n}(e_{ii})) = \varphi (\alpha_i) \equiv \alpha_i (e_{11} + \cdots + e_{nn}) = t_{\alpha_{\sigma(1)}, \ldots, \alpha_{\sigma(n)}} (e_{\sigma(i) \sigma(i)}) = t_{\alpha_{\sigma(1)}, \ldots, \alpha_{\sigma(n)}} ( \varphi (e_{ii}) )
\]
and the proof is complete.
\end{proof}

A trace $\tr$ on an algebra $A$ is degenerate if there exists a non-zero element $a \in A$ such that, for every $b\in A$,
\[
\tr(ab) = 0.
\]
This means that the symmetric bilinear form on $A$ given by $\frak{b}(a,b) = \tr(ab)$ is degenerate. 

In the following lemma we describe degenerate traces on $D_n$. 

\begin{Lemma}\label{degenerate traces}
Let $D_n^{t_{\alpha_1, \ldots, \alpha_n}}$ be the algebra of $n \times n$ diagonal matrices endowed with the trace  
$t_{\alpha_1, \ldots, \alpha_n}$. Such a trace is degenerate if and only if $\alpha_i = 0$, for some $i = 1$, \dots, $n$. 
\end{Lemma}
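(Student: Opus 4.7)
The plan is to prove both implications directly from the definition of degeneracy, using the diagonal matrix units $e_{11},\ldots,e_{nn}$ as test elements. The statement is purely linear-algebraic since $D_n$ is commutative and a product in $D_n$ is just coordinate-wise multiplication.

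For the ``if'' direction, suppose $\alpha_i = 0$ for some $i$. I would pick the candidate witness $a = e_{ii}$, which is nonzero. For any $b = \mathrm{diag}(b_{11},\ldots,b_{nn}) \in D_n$ one has $ab = b_{ii} e_{ii}$, so $t_{\alpha_1,\ldots,\alpha_n}(ab) = \alpha_i b_{ii} = 0$. Hence the bilinear form $\mathfrak{b}(a,b) = \mathrm{tr}(ab)$ is degenerate.

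For the ``only if'' direction, suppose $\alpha_i \neq 0$ for every $i$, and take any $a = \mathrm{diag}(a_{11},\ldots,a_{nn})$ with $\mathrm{tr}(ab) = 0$ for all $b \in D_n$. Specialising $b = e_{jj}$ yields $\mathrm{tr}(a e_{jj}) = \alpha_j a_{jj} = 0$, and since $\alpha_j \neq 0$ this forces $a_{jj} = 0$. Varying $j$ over $\{1,\ldots,n\}$ gives $a = 0$, so the trace is non-degenerate.

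There is no real obstacle here: both implications reduce to testing on the primitive idempotents $e_{ii}$, which span $D_n$ and diagonalise multiplication. The only thing worth being mildly careful about is making sure the chosen witness in the ``if'' direction is nonzero (which it is, being a matrix unit) and that the test set $\{e_{jj}\}$ is sufficient to detect all coordinates of $a$ in the ``only if'' direction, which is immediate since $a e_{jj}$ isolates the $j$-th diagonal entry.
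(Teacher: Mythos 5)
Your proof is correct and follows essentially the same route as the paper: both directions are settled by testing the bilinear form on the diagonal matrix units $e_{ii}$. The only cosmetic difference is that you prove the ``only if'' direction by contraposition (all $\alpha_i\neq 0$ implies non-degeneracy), whereas the paper argues directly from a nonzero degenerate witness $a$; the underlying computation $\tr(ae_{ii})=\alpha_i a_{ii}$ is identical.
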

\begin{proof}
Let $t_{\alpha_1, \ldots, \alpha_n}$ be degenerate. By definition, there exists a non-zero element $a = diag(a_{11}, \ldots, a_{nn}) \in D_n$ such that, for every $b\in D_n$, $\tr(ab) = 0$. Since $a$ is non-zero, then $a_{ii} \neq 0 $, for some $i = 1$, \dots, $n$. Hence
\[
\tr(a e_{ii}) = \tr (a_{ii} e_{ii}) = a_{ii} \tr (e_{ii}) = \alpha_i a_{ii} = 0.
\]
It follows that $\alpha_i = 0$ and we have proved the first implication. 

In order to prove the opposite implication, fix that
$\alpha_i = 0$, and let us consider the element $a = e_{ii}$. It is immediate that $\tr (ab) = 0$ for every $b \in D_n$, and therefore the trace is degenerate.
\end{proof}

\section{Diagonal matrices with a degenerate trace}

In this section we focus our attention on the diagonal matrix algebra $D_{n+1}$ endowed with a degenerate trace $t_{\alpha_1, \ldots, \alpha_{n+1}}$. Since the trace is degenerate, by Lemma \ref{degenerate traces}, at least one of the $\alpha_i$'s is zero. In light of Remark \ref{isomorphic Dn}, we may suppose that $\alpha_{n+1} = 0$ and so the trace on  $D_{n+1}$ is $t_{\alpha_1, \ldots, \alpha_{n}, 0}$.

\begin{Theorem} \label{Ninni's conjecture}
Every trace identity of $D_{n+1}^{t_{\alpha_1, \ldots, \alpha_{n}, 0}}$, which is not a consequence of the commutator $[x_1, x_2] \equiv 0$, is a consequence of pure trace identities.
\end{Theorem}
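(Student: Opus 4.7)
The plan is to normalize $f$ modulo the commutator identity and then exploit the direct-sum decomposition $D_{n+1} = D_n \oplus F e_{n+1, n+1}$ together with the hypothesis $\alpha_{n+1} = 0$ in order to extract the pure-trace parts one at a time.

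First, I would use the commutativity of $D_{n+1}$: working modulo the $T^{tr}$-ideal generated by $[x_1, x_2]$, all variables commute, both inside and outside traces (cyclic invariance of $\Tr$ together with commutativity collapses every reordering). A multilinear trace monomial in $x_1, \dots, x_n$ is then determined by a set partition of $\{1, \dots, n\}$ in which one block (possibly empty) is designated as the ``outside'' block. Grouping monomials by their outside block $S$, every $f \in MT_n$ admits a unique expression
\[
f \equiv \sum_{S \subseteq \{1, \ldots, n\}} p_S \cdot \prod_{i \in S} x_i \pmod{[x_1, x_2]},
\]
where each $p_S$ is a multilinear pure trace polynomial in the variables $\{x_i : i \notin S\}$, with the convention that $p_{\{1,\ldots,n\}}$ is a scalar and $\prod_{i \in \emptyset} x_i = 1$.

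Next I would substitute $x_i = a_i + \beta_i e_{n+1, n+1}$, with $a_i \in D_n$ (viewed as diagonal matrices with vanishing $(n+1,n+1)$-entry) and $\beta_i \in F$. Since $D_n$ and $F e_{n+1, n+1}$ are orthogonal direct summands of $D_{n+1}$, products of the $x_i$'s split additively into their two components; and because $\alpha_{n+1} = 0$ the trace annihilates $F e_{n+1, n+1}$ and restricts to $t_{\alpha_1, \ldots, \alpha_n}$ on $D_n$, so every $\Tr(x_{i_1} \cdots x_{i_a})$ evaluates to $t_{\alpha_1, \ldots, \alpha_n}(a_{i_1} \cdots a_{i_a})$ and is independent of the $\beta$'s. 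Projecting the evaluation of $f$ onto the $F e_{n+1, n+1}$-component, the hypothesis that $f$ is a trace identity yields
\[
\sum_{S \subseteq \{1, \ldots, n\}} p_S(a_1, \dots, a_n) \cdot \prod_{i \in S} \beta_i = 0
\]
for every choice of $a_i \in D_n$ and $\beta_i \in F$.

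Finally, the squarefree monomials $\prod_{i \in S} \beta_i$, for $S \subseteq \{1, \dots, n\}$, are pairwise distinct and hence linearly independent over $F[\beta_1,\ldots,\beta_n]$, so each coefficient $p_S(a_1, \ldots, a_n)$ must vanish on $D_n^{t_{\alpha_1, \ldots, \alpha_n}}$. Since pure trace polynomials ignore the $F e_{n+1, n+1}$-component of any substitution, each $p_S$ is actually a pure trace identity of $D_{n+1}^{t_{\alpha_1, \ldots, \alpha_n, 0}}$ itself. Therefore $f$ is congruent modulo $[x_1, x_2]$ to $\sum_S p_S \cdot \prod_{i \in S} x_i$, which belongs to the $T^{tr}$-ideal generated by pure trace identities, as required. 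The main technical point is the existence and uniqueness of the canonical form in the first step: once the decomposition $f \equiv \sum_S p_S \cdot \prod_{i \in S} x_i$ is in place, the direct-sum projection and the linear independence of the $\beta$-monomials finish the proof almost mechanically.
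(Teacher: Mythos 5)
Your argument is correct, and it is a cleaner execution of the same underlying mechanism that the paper uses: since $\alpha_{n+1}=0$, the idempotent $e_{n+1,n+1}$ is annihilated by the trace but not by multiplication, so it detects the pure-trace coefficient sitting in front of each ``outside'' monomial. The paper does not pass to a canonical form first; it splits $f$ as $g\,x_k+h$, substitutes $e_{n+1,n+1}$ for a chosen subset of variables, runs a downward induction on the number of variables appearing outside traces, and then repeats the whole procedure for $h$. Those substitutions are exactly the degenerate cases of your generic evaluation $x_i=a_i+\beta_i e_{n+1,n+1}$ (some $\beta_i=1$, $a_i=0$, the rest $\beta_i=0$). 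Your version replaces the iteration by a single appeal to the linear independence of the squarefree monomials $\prod_{i\in S}\beta_i$ (valid because $F$, being of characteristic zero, is infinite), which isolates every coefficient $p_S$ at once and yields the explicit decomposition $f\equiv\sum_S p_S\prod_{i\in S}x_i$ with each $p_S$ a pure trace identity --- a sharper conclusion than the paper's more procedural ``repeat until the process stops''. Two points you should make explicit: (i) only the existence, not the uniqueness, of the canonical form is needed, and reordering variables \emph{inside} a trace uses that the $T^{tr}$-ideal generated by $[x_1,x_2]$ is closed under the operation $u\mapsto\Tr(uv)$, which holds because it is by definition the smallest ideal of trace identities containing the commutator; (ii) your last step, passing from $p_S\in\Id^{tr}(D_n^{t_{\alpha_1,\ldots,\alpha_n}})$ to $p_S\in\Id^{tr}(D_{n+1}^{t_{\alpha_1,\ldots,\alpha_n,0}})$, is needed for the statement to read as ``consequence of pure trace identities of $D_{n+1}$'' and is precisely the content of Corollary~\ref{f is pure trace, form of Tr(fx)} of the paper, which your orthogonality observation reproves directly.
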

\begin{proof}
Let $f(x_1, \ldots, x_k)$ be a multilinear trace identity of $D_{n+1}^{t_{\alpha_1, \ldots, \alpha_{n}, 0}}$ of degree $k$ which is not a consequence of the commutator $[x_1, x_2] \equiv 0$. 

If $f$ is a pure trace polynomial then there is nothing to prove. If not, there exists at least one variable, say $x_k$, appearing outside the traces of one or more monomials of $f$. We can write $f$ as:
\[
f(x_1, \ldots, x_k) = g(x_1, \ldots, x_{k-1}) x_k + h(x_1, \ldots, x_k),
\]
where
\begin{itemize}
\item[•] $g(x_1, \ldots, x_{k-1})$ is a trace polynomial (not necessarily pure trace),
\item[•] $h(x_1, \ldots, x_k)$ is a trace polynomial in which $x_k$ appears always inside some traces.
\end{itemize}

Let us consider the evaluation $x_k = e_{n+1,n+1}$. It is clear that, for every $a_1$, \dots, $a_{k-1} \in D_{n+1}$, we have 
\[
h(a_1, \ldots, a_{k-1}, e_{n+1,n+1}) = 0.
\]
Now we focus our attention on the polynomial $g(x_1, \ldots, x_{k-1})$. Let us consider the monomials of $g$ having the largest number of variables outside the traces, and let us denote such a number by $l$. Clearly there might be several of these monomials. Fix one of them, say $M_1$, and let $r+l = k-1$, then
\[
M_1 = m_1(x_{i_1}, \ldots, x_{i_r}) x_{j_1} \cdots  x_{j_l}.
\]
We consider the evaluation (substitution):
\[
x_{j_1} = \cdots = x_{j_l} = e_{n+1,n+1}.
\]
Clearly all the monomials of $g$ having less than $l$ variables outside the traces vanish under such a substitution. The same happens to all the monomials with exactly $l$ variables outside the traces but in which at least one of these $l$ variables is not among $\{ x_{j_1}, \ldots,  x_{j_l} \}$. We are left just with monomials having exactly the variables $x_{j_1}$, \dots,  $x_{j_l}$ outside the traces and all remaining variables inside traces. These monomials can differ from $M_1$ just in their pure trace part. In effect, we have
\[
g_{\mid x_{j_1} = \cdots = x_{j_l} = e_{n+1,n+1}} = \underbrace{(m_1(x_{i_1}, \ldots, x_{i_r}) + \cdots + m_s(x_{i_1}, \ldots, x_{i_r})) }_{m(x_{i_1}, \ldots, x_{i_r})} e_{n+1, n+1}. 
\]

CLAIM: $m(x_{i_1}, \ldots, x_{i_{r}})$ is a pure trace identity of $D_{n+1}^{t_{\alpha_1, \ldots, \alpha_n, 0}}$. 

If not, there exist diagonal matrices $d_1$, \dots, $d_{r} \in D_{n+1} $ such that $ m(d_1, \ldots, d_{r}) \neq 0$. But since $m$ is a pure trace polynomial, such a non-zero evaluation is, for some $\mu \in F$, equal to $\mu (e_{11}+ \cdots + e_{n+1,n+1})$. Hence we get a contradiction, since we have
\[
f|_{x_k = x_{j_1} = \cdots = x_{j_l} = e_{n+1,n+1}, x_{i_1} = d_1, \ldots, x_{i_r} = d_r} = \mu e_{n+1,n+1} \neq 0. 
\]

Using the same approach we can deal with all remaining monomials having exactly $l$ variables outside the traces. In any case, we obtain that some part of the polynomial $g$ is a consequence of a pure trace identity. 

Then we consider the monomials of $g$ having exactly $l-1$ variables outside the traces (if there are not such monomials we consider $l-2$, $l-3$, and so on). We apply the same technique. As this process cannot go on infinitely many steps, at the end of it we obtain that $g(x_1, \ldots, x_{k-1})$ is a consequence of pure trace identities. 

It remains to deal with the polynomial $h(x_1, \ldots, x_k)$. We use the same approach but taking into account that the variable $x_k$ is always inside a trace. For instance, we can start with the variable $x_{k-1}$ in case it appears outside the trace in at least one monomial of $h$; if not we go to $x_{k-2}$, and so on.

This process will stop, after several steps, and the proof is complete. 
\end{proof}

In the following lemma we shall see that the identities of $D_{n+1}^{t_{\alpha_1, \ldots, \alpha_n, 0}}$ and of $D_{n}^{t_{\alpha_1, \ldots, \alpha_n}}$ are very closely related.

\begin{Lemma} \label{link between identities of Dn+1 and Dn}
Let $\alpha_1$, \dots, $\alpha_n \in F \setminus \{0\}$. Then $f(x_1, \ldots, x_k) \in \Id^{tr}(D_n^{t_{\alpha_1, \ldots, \alpha_n}})$ if and only if $\Tr(f(x_1, \ldots, x_k) x_{k+1}) \in \Id^{tr}(D_{n+1}^{t_{\alpha_1, \ldots, \alpha_n, 0}})$. 
\end{Lemma}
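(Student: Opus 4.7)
The plan is to exploit the fact that the $(n{+}1)$-st diagonal coordinate of $D_{n+1}$ is essentially invisible to the trace $t_{\alpha_1,\ldots,\alpha_n,0}$: its weight is zero, so the trace sees only the projection onto the first $n$ coordinates. The key technical observation, which I would establish first, is the following compatibility lemma. For any $d_1,\ldots,d_k\in D_{n+1}$, write $d_j=\mathrm{diag}(d_j^{(1)},\ldots,d_j^{(n)},d_j^{(n+1)})$ and let $d_j'=\mathrm{diag}(d_j^{(1)},\ldots,d_j^{(n)})\in D_n$. Then for every multilinear trace polynomial $f$ and every $i\in\{1,\ldots,n\}$,
\[
[f(d_1,\ldots,d_k)]_{ii} \;=\; [f(d_1',\ldots,d_k')]_{ii},
\]
where on the left $f$ is evaluated in $D_{n+1}^{t_{\alpha_1,\ldots,\alpha_n,0}}$ and on the right in $D_n^{t_{\alpha_1,\ldots,\alpha_n}}$. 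This is proved by induction on the structure of $f$: monomial products multiply componentwise on the diagonal, and any formal trace $\Tr(M)$ evaluates to $\sum_{j=1}^{n}\alpha_j\prod_s d_{i_s}^{(j)}$ in either algebra, since the only potentially differing contribution---the $(n{+}1)$-st slot on the $D_{n+1}$ side---is annihilated by $\alpha_{n+1}=0$.

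For the forward implication, assume $f\in\Id^{tr}(D_n^{t_{\alpha_1,\ldots,\alpha_n}})$. Given arbitrary $d_1,\ldots,d_{k+1}\in D_{n+1}$, the compatibility lemma combined with the hypothesis yields $[f(d_1,\ldots,d_k)]_{ii}=0$ for $i=1,\ldots,n$. Multiplication by $d_{k+1}$ preserves this vanishing on the first $n$ diagonal slots, so
\[
t_{\alpha_1,\ldots,\alpha_n,0}\bigl(f(d_1,\ldots,d_k)\,d_{k+1}\bigr)=\sum_{i=1}^{n}\alpha_i\cdot 0+0\cdot[\cdot]_{n+1,n+1}=0,
\]
showing that $\Tr(f(x_1,\ldots,x_k)x_{k+1})$ vanishes on $D_{n+1}^{t_{\alpha_1,\ldots,\alpha_n,0}}$.

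For the converse, suppose $\Tr(f\cdot x_{k+1})\in\Id^{tr}(D_{n+1}^{t_{\alpha_1,\ldots,\alpha_n,0}})$. Given $d_1',\ldots,d_k'\in D_n$, extend each $d_j'$ to $d_j\in D_{n+1}$ by appending $0$ in the last slot. For each fixed $i\in\{1,\ldots,n\}$ substitute $x_{k+1}=e_{ii}$. The assumption gives
\[
0=t_{\alpha_1,\ldots,\alpha_n,0}\bigl(f(d_1,\ldots,d_k)\,e_{ii}\bigr)=\alpha_i\,[f(d_1,\ldots,d_k)]_{ii}.
\]
Since $\alpha_i\neq 0$, we conclude $[f(d_1,\ldots,d_k)]_{ii}=0$, and by the compatibility lemma $[f(d_1',\ldots,d_k')]_{ii}=0$. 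Ranging over $i=1,\ldots,n$ exhausts every diagonal entry of $f(d_1',\ldots,d_k')\in D_n$, so $f$ vanishes there.

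The only real point requiring care is the compatibility lemma: one must justify that mixed trace polynomials (not merely pure ones) respect the truncation, and this rests squarely on the hypothesis $\alpha_{n+1}=0$ together with the commutativity of $D_{n+1}$. Everything else is a direct evaluation argument; the hypothesis that all the $\alpha_i$ ($i\le n$) are nonzero is used precisely in the converse direction to invert $\alpha_i$ after substituting $x_{k+1}=e_{ii}$.
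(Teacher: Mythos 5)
Your proof is correct. The converse direction is essentially identical to the paper's: substitute $x_{k+1}=e_{ii}$ and use $\alpha_i\neq 0$ to recover the $i$-th diagonal entry of the evaluation of $f$ (the paper phrases it as a contradiction starting from a single nonzero $\mu_i$, you run over all $i$; this is cosmetic). Where you genuinely diverge is the forward direction. The paper reduces to evaluations on the basis $\{e_{11},\ldots,e_{n+1,n+1}\}$ and splits into cases according to whether some variable is sent to $e_{n+1,n+1}$, then argues monomial by monomial. You instead isolate a compatibility (projection) lemma: for arbitrary $d_1,\ldots,d_k\in D_{n+1}$, the first $n$ diagonal entries of $f(d_1,\ldots,d_k)$ coincide with those of $f$ evaluated on the truncations in $D_n$, because products of diagonal matrices act componentwise and every formal trace evaluates to $\sum_{j=1}^{n}\alpha_j\prod_s d_{i_s}^{(j)}$ in either algebra, the $(n{+}1)$-st slot being annihilated by the zero weight. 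This buys you a uniform one-line conclusion for the forward implication, valid for arbitrary (not just multilinear or basis) substitutions, and it also cleanly justifies a step the paper leaves implicit in its converse, namely that evaluating $f$ on elements of $D_n$ viewed inside $D_{n+1}$ gives the same first $n$ diagonal entries as evaluating in $D_n$ itself. The underlying mechanism --- the invisibility of the last coordinate to the degenerate trace --- is the same in both arguments, but your packaging is tidier and slightly more robust.
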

\begin{proof}
Let $f(x_1, \ldots, x_k) \in \Id^{tr}(D_n^{t_{\alpha_1, \ldots, \alpha_n}})$. In order to prove that $\Tr(f(x_1, \ldots, x_k) x_{k+1}) \in \Id^{tr}(D_{n+1}^{t_{\alpha_1, \ldots, \alpha_n, 0}})$ we shall evaluate the variables on the basis $\{ e_{11}, \ldots, e_{n+1, n+1} \}$ of $D_{n+1}^{t_{\alpha_1, \ldots, \alpha_n, 0}}$. 

If no variable among $x_1$, \dots, $x_k$ is evaluated on $e_{n+1,n+1}$, then the result follows since $f \in \Id^{tr}(D_n^{t_{\alpha_1, \ldots, \alpha_n}})$. In fact, for every evaluation $a_1$, \dots, $a_k$ of this kind, we get that $f(a_1, \ldots, a_k) = \mu e_{n+1,n+1}$, for some $\mu \in F.$  Clearly, for every evaluation $a_{k+1}$ of the variable $x_{k+1}$, we get that 
$\Tr(f(x_1, \ldots, x_k) x_{k+1})$ vanishes (recall that $t_{\alpha_1, \ldots, \alpha_n, 0} (e_{n+1, n+1}) = 0$).

Hence we have to evaluate at least one variable, say $x_k$, to $e_{n+1,n+1}$. Now let us consider a monomial $M(x_1, \ldots, x_k)$ of $f$. If $x_k$ is inside a trace in $M$, then the evaluation of such a monomial will be zero and, of course, it will be also equal to the evaluation of $\Tr( M(x_1, \ldots, x_k) x_{k+1} )$. Otherwise $M$ will be a monomial of the type
\[
M(x_1, \ldots, x_k) = \Tr(\cdots) \cdots \Tr(\cdots) \cdots x_k
\]
and so the evaluation of $\Tr( M(x_1, \ldots, x_k) x_{k+1} )$ will be zero also in this case. Since this happens for every monomial of $f$, one of the implications is proved.

Let now $\Tr(f(x_1, \ldots, x_k) x_{k+1}) \in \Id^{tr}(D_{n+1}^{t_{\alpha_1, \ldots, \alpha_n, 0}})$. Suppose, by contradiction, that $f(x_1, \ldots, x_k)$ is not a trace identity of $D_{n}^{t_{\alpha_1, \ldots, \alpha_n}}$. Hence  there exist $a_1$, \dots, $a_k \in D_n^{t_{\alpha_1, \ldots, \alpha_n}}$ such that, for some $\mu_i \in F$,
\[
f(a_1, \ldots, a_k) = \sum_{i = 1}^{n} \mu_i e_{ii} \neq 0.
\]
Since such an evaluation does not vanish, at least one of the $\mu_i$'s is non-zero, say $\mu_1\ne 0$. Now we evaluate $x_{k+1}$ to $e_{11}$ and we reach a contradiction since
\[
\Tr 	\left ( f(a_1, \ldots, a_k) e_{11} \right ) =
\Tr 	\left ( \left ( \sum_{i = 1}^{n} \mu_i e_{ii} \right ) e_{11} \right ) = \Tr(\mu_1 e_{11}) = \alpha_1 \mu_1 \neq 0. 
\]
\end{proof}

The above lemma implies the following corollary. 

\begin{Corollary} \label{f is pure trace, form of Tr(fx)}
If $f(x_1, \ldots, x_k)$ is a pure trace identity of $D_{n}^{t_{\alpha_1, \ldots, \alpha_n}}$  then $f(x_1, \ldots, x_k) \in \Id^{tr} 	\left ( D_{n+1}^{t_{\alpha_1, \ldots, \alpha_n, 0}} \right )$.
\end{Corollary}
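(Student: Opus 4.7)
The plan is to deduce the corollary directly from Lemma \ref{link between identities of Dn+1 and Dn}. Since a pure trace identity of $D_n^{t_{\alpha_1,\ldots,\alpha_n}}$ is in particular a trace identity, the lemma immediately yields $\Tr(f(x_1,\ldots,x_k)\, x_{k+1}) \in \Id^{tr}(D_{n+1}^{t_{\alpha_1,\ldots,\alpha_n,0}})$. The task is therefore to convert this identity, in which $f$ sits inside an enveloping trace and is multiplied by a fresh variable, into an identity for $f$ on its own; this is exactly where the purity hypothesis intervenes.

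Since every variable of $f$ appears inside a trace, $f$ lies in the commuting subring $F[\Tr(M)\mid M\in\mathcal{M}]$ of $\Ft$. Writing $f$ as a linear combination of monomials $\Tr(M_{\lambda,1})\cdots\Tr(M_{\lambda,r_\lambda})$ and applying the defining relation $\Tr(\Tr(M)N)=\Tr(M)\Tr(N)$ of the free trace algebra termwise, I obtain
\[
\Tr(f(x_1,\ldots,x_k)\,x_{k+1}) = f(x_1,\ldots,x_k)\,\Tr(x_{k+1})
\]
already as an identity in $\Ft$. Consequently $f(x_1,\ldots,x_k)\,\Tr(x_{k+1})$ is itself a trace identity of $D_{n+1}^{t_{\alpha_1,\ldots,\alpha_n,0}}$.

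To extract $f$ alone from this product it is enough to specialise $x_{k+1}$ so that $\Tr(x_{k+1})$ evaluates to a nonzero scalar. The standing hypothesis $\alpha_1,\ldots,\alpha_n \in F\setminus\{0\}$ inherited from Lemma \ref{link between identities of Dn+1 and Dn} permits me to fix any $i\in\{1,\ldots,n\}$ and put $x_{k+1}=e_{ii}$, so that $t_{\alpha_1,\ldots,\alpha_n,0}(e_{ii})=\alpha_i\neq 0$. Substituting arbitrary $a_1,\ldots,a_k\in D_{n+1}$ for $x_1,\ldots,x_k$ then reduces the identity to $\alpha_i \cdot f(a_1,\ldots,a_k)=0$ in $D_{n+1}$, and dividing by the invertible scalar $\alpha_i$ gives $f(a_1,\ldots,a_k)=0$, which is the claim.

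I do not anticipate any genuine obstacle: the argument is a short two-step reduction, and the only point requiring a little care is the use of $\Tr(\Tr(M)N)=\Tr(M)\Tr(N)$ to slide a pure trace polynomial through an enclosing trace. Everything else is a direct appeal to the preceding lemma and a single specialisation of the auxiliary variable $x_{k+1}$ to a diagonal matrix unit on which the trace does not vanish.
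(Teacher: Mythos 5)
Your argument is correct and follows the paper's own proof essentially verbatim: both pass through $\Tr(f\,x_{k+1})=f\,\Tr(x_{k+1})$ via the purity of $f$ and then invoke Lemma \ref{link between identities of Dn+1 and Dn}. Your final step of specialising $x_{k+1}=e_{ii}$ with $\alpha_i\neq 0$ just makes explicit what the paper dismisses with ``it is clear that $f$ has to be a trace identity,'' so there is no substantive difference.
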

\begin{proof}
Since $f$ is a pure trace polynomial, it follows that
$$
\Tr(f(x_1, \ldots, x_k) x_{k+1}) = f(x_1, \ldots, x_k) \Tr(x_{k+1}).
$$
By Lemma \ref{link between identities of Dn+1 and Dn}, we have that $f(x_1, \ldots, x_k) \Tr(x_{k+1})$ is a trace identity on $D_{n+1}^{t_{\alpha_1, \ldots, \alpha_n, 0}}$. Hence it is clear that $f$ has to be a trace identity on $D_{n+1}^{t_{\alpha_1, \ldots, \alpha_n, 0}}$ and the proof is complete. 
\end{proof}

\begin{Lemma} \label{form of the identities of Dn+1}
Every trace identity of $D_{n+1}^{t_{\alpha_1, \ldots, \alpha_n, 0}}$, which is not a consequence of the commutator $[x_1, x_2] \equiv 0$, is a consequence of trace identities of the form
\[
\Tr(f(x_1, \ldots, x_k) x_{k+1}).
\]
\end{Lemma}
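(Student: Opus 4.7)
The plan is to combine Theorem~\ref{Ninni's conjecture} with a small manipulation inside $\Ft$. By that theorem, every trace identity of $D_{n+1}^{t_{\alpha_1,\ldots,\alpha_n,0}}$ which is not a consequence of $[x_1,x_2]$ already follows from pure trace identities, so it suffices to show that every multilinear pure trace identity $g(x_1,\ldots,x_{k+1})$ of positive degree can itself be rewritten, as an element of $\Ft$, in the form $\Tr(f(x_1,\ldots,x_k)\,x_{k+1})$ for some (mixed) trace polynomial $f$.

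To carry this out, I would take a typical monomial of $g$, namely $M = \Tr(w_1)\cdots\Tr(w_r)$, where the $w_i$ are words in $x_1,\ldots,x_{k+1}$. Since $g$ is multilinear, the variable $x_{k+1}$ appears in exactly one of the $w_i$, which after reindexing may be taken to be $w_r$; writing $w_r = A\,x_{k+1}\,B$ and using the cyclic property of $\Tr$,
\[
\Tr(w_r) = \Tr(A\,x_{k+1}\,B) = \Tr(BA\,x_{k+1}).
\]
Next I would absorb the remaining pure trace factors $\Tr(w_1),\ldots,\Tr(w_{r-1})$ inside this trace by iterating the defining relation $\Tr(\Tr(N)P) = \Tr(N)\Tr(P)$ of $\Ft$, read from right to left. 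A straightforward induction on $r$ then yields
\[
M = \Tr\bigl( \Tr(w_1)\cdots\Tr(w_{r-1})\cdot BA \cdot x_{k+1} \bigr).
\]
Summing over all monomials of $g$ and collecting terms gives $g = \Tr(f(x_1,\ldots,x_k)\,x_{k+1})$ for some trace polynomial $f$.

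Consequently any pure trace identity of $D_{n+1}^{t_{\alpha_1,\ldots,\alpha_n,0}}$ is already, verbatim in $\Ft$, a trace polynomial of the required shape, hence is trivially a consequence of itself viewed as an identity of the form $\Tr(f\,x_{k+1})$; combined with Theorem~\ref{Ninni's conjecture} this finishes the argument. I do not expect a genuine obstacle: both the reduction to pure trace polynomials and the cyclicity/absorption manipulation are already in hand, so the proof reduces to careful bookkeeping. The only point requiring a little attention is the iterative application of $\Tr(\Tr(N)P) = \Tr(N)\Tr(P)$ to pull several pure trace factors inside a single trace, which is a short induction on the number of such factors.
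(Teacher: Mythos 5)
Your proposal is correct and takes essentially the same approach as the paper: after invoking Theorem~\ref{Ninni's conjecture}, the paper likewise rewrites each monomial of a multilinear pure trace identity by singling out the trace factor containing the last variable, cyclically moving that variable to the end, and using $\Tr(\Tr(N)P)=\Tr(N)\Tr(P)$ to absorb the remaining trace factors, so that the whole identity becomes $\Tr(f(x_1,\ldots,x_k)\,x_{k+1})$ verbatim in $\Ft$.
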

\begin{proof}
Let $g$ be a trace identity of $D_{n+1}^{t_{\alpha_1, \ldots, \alpha_n, 0}}$ and assume that $g$ is not a consequence of $[x_1, x_2].$ Then, by Theorem \ref{Ninni's conjecture}, $g$ is a consequence of pure trace identities. Let $f'(x_1, \ldots, x_{k})$ be one of these pure trace identities. We shall construct a new polynomial $f(x_1, \ldots, x_{k-1})$ starting from $f'$ in the following way. Let $M$ be a monomial of $f'$ of the form
\[
M = \Tr(x_{i_1} \cdots x_{i_a}) \cdots \Tr(x_{j_1} \cdots x_{j_b}) \Tr(x_{l_1} \cdots x_{l_{c-1}} x_{l_{c}})
\]
where we assume without loss of generality that $ l_{c} = k$. Then in our new polynomial $f$ we substitute the monomial $M$ by
\[
M' = \Tr(x_{i_1} \cdots x_{i_a}) \cdots \Tr(x_{j_1} \cdots x_{j_b}) x_{l_1} \cdots x_{l_{c-1}}. 
\] 
It follows that $f' = \Tr(f(x_1, \ldots, x_{k-1}) x_{k})$ and the lemma is proved. 
\end{proof}

Now we prove the main result of this section. 

Given trace polynomials $f_1$, \dots, $f_k$ let $\langle f_1, \ldots, f_k \rangle_{T^{tr}}$ denote the trace ideal generated by $f_1$, \dots, $f_k$. Clearly it is the least ideal of trace identities that contains the given polynomials.
\begin{Theorem} \label{identities of Dn+1 from those of Dn}
Let $\alpha_1$, \dots, $\alpha_n \in F \setminus \{0\}$ be non-zero scalars. If 
\[
\Id^{tr} 	\left ( D_n^{t_{\alpha_1, \ldots, \alpha_n}} \right ) = \langle f_1 (x_1, \ldots, x_k), \ldots, f_s (x_1, \ldots, x_h), [x_1, x_2]  \rangle_{T^{tr}}
\] 
then
\[
\Id^{tr} 	\left ( D_{n+1}^{t_{\alpha_1, \ldots, \alpha_n, 0}} \right ) = \langle \Tr \left ( f_1 (x_1, \ldots, x_k) x_{k+1} \right ), \ldots, \Tr \left ( f_s (x_1, \ldots, x_h) x_{h+1} \right ), [x_1, x_2]  \rangle_{T^{tr}}.
\]
\end{Theorem}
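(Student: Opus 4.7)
The plan is to prove the two inclusions separately. The inclusion $\supseteq$ is essentially immediate: each $f_i$ lies in $\Id^{tr}(D_n^{t_{\alpha_1,\ldots,\alpha_n}})$, so Lemma~\ref{link between identities of Dn+1 and Dn} gives $\Tr(f_i\, x_{?+1}) \in \Id^{tr}(D_{n+1}^{t_{\alpha_1,\ldots,\alpha_n,0}})$, while $[x_1,x_2]\equiv 0$ is obviously an identity of the commutative algebra $D_{n+1}$. Hence the right-hand $T^{tr}$-ideal is contained in $\Id^{tr}(D_{n+1}^{t_{\alpha_1,\ldots,\alpha_n,0}})$.

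For the inclusion $\subseteq$, I take $g \in \Id^{tr}(D_{n+1}^{t_{\alpha_1,\ldots,\alpha_n,0}})$ and observe that modulo the $T^{tr}$-ideal generated by $[x_1,x_2]$ I may invoke Theorem~\ref{Ninni's conjecture}: $g$ is a consequence of pure trace identities of $D_{n+1}^{t_{\alpha_1,\ldots,\alpha_n,0}}$. By Lemma~\ref{form of the identities of Dn+1}, each such pure trace identity has the form $\Tr(f(x_1,\ldots,x_{k-1})\, x_k)$ for some trace polynomial $f$. Lemma~\ref{link between identities of Dn+1 and Dn} then guarantees $f \in \Id^{tr}(D_n^{t_{\alpha_1,\ldots,\alpha_n}})$, and the hypothesis of the theorem yields that $f$ belongs to the $T^{tr}$-ideal generated by $f_1,\ldots,f_s$ and $[x_1,x_2]$. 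The whole matter therefore reduces to showing that if $f$ can be written as such a consequence, then so can $\Tr(f\, x_k)$ in terms of the generators $\Tr(f_i\, x_{?+1})$ and $[x_1,x_2]$.

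The final lifting step goes as follows. Modulo $[x_1,x_2]$ the polynomial $f$ is a finite sum $\sum_i r_i\, f_{j_i}(u_{i,1},\ldots,u_{i,k_{j_i}})$, where the $u_{i,\ell}$ are arbitrary trace polynomials substituted for the free variables of $f_{j_i}$ and each $r_i$ is a trace polynomial coefficient. Split $r_i = r_i' r_i''$ into its central pure-trace part $r_i'$ and its non-trace polynomial part $r_i''$. Using centrality of traces and commutativity modulo $[x_1,x_2]$, I rewrite
\[
\Tr\bigl(f(x_1,\ldots,x_{k-1})\, x_k\bigr) \equiv \sum_i r_i'\, \Tr\bigl(f_{j_i}(u_{i,1},\ldots,u_{i,k_{j_i}})\, r_i''\, x_k\bigr) \pmod{[x_1,x_2]}.
\]
Each summand is the substitution instance of $\Tr(f_{j_i}(x_1,\ldots,x_{k_{j_i}})\, x_{k_{j_i}+1})$ obtained by setting $x_\ell\mapsto u_{i,\ell}$ and $x_{k_{j_i}+1}\mapsto r_i''\, x_k$, multiplied by the central trace polynomial $r_i'$, and therefore lies in $\langle \Tr(f_1\, x_{k+1}),\ldots,\Tr(f_s\, x_{h+1}),[x_1,x_2]\rangle_{T^{tr}}$. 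This completes the proof.

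I expect the one delicate point to be this last lifting step: one has to argue that a $T^{tr}$-relation for $f$ survives being placed under an outer $\Tr(-\cdot x_k)$, and the argument genuinely uses both the centrality of traces and the fact that, modulo $[x_1,x_2]$, non-trace factors may be commuted past each other freely. Once that bookkeeping is in place, Theorem~\ref{Ninni's conjecture}, Lemma~\ref{link between identities of Dn+1 and Dn} and Lemma~\ref{form of the identities of Dn+1} do all the remaining structural work.
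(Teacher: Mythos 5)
Your proposal is correct and follows the same route as the paper: the inclusion $\supseteq$ via Lemma~\ref{link between identities of Dn+1 and Dn}, and the inclusion $\subseteq$ by combining Theorem~\ref{Ninni's conjecture}, Lemma~\ref{form of the identities of Dn+1} and Lemma~\ref{link between identities of Dn+1 and Dn} to reduce to generators of the form $\Tr(h\,x_{k+1})$ with $h\in\Id^{tr}(D_n^{t_{\alpha_1,\ldots,\alpha_n}})$. The only difference is that you spell out the final lifting step (passing from a $T^{tr}$-expression for $h$ to one for $\Tr(h\,x_{k+1})$ in terms of the $\Tr(f_i\,x_{?+1})$), which the paper leaves implicit; your bookkeeping there, using centrality of traces and commutativity modulo $[x_1,x_2]$, is sound.
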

\begin{proof}
By Lemma \ref{link between identities of Dn+1 and Dn} we know that $\Tr \left ( f_1 (x_1, \ldots, x_k) x_{k+1} \right )$, \dots, $\Tr \left ( f_s (x_1, \ldots, x_h) x_{h+1} \right )$ are trace identities for $D_{n+1}^{t_{\alpha_1, \ldots, \alpha_n, 0}} $. Moreover the commutator is obviously an identity for both algebras. 

In order to complete the proof, let $f$ be a trace identity of $D_{n+1}^{t_{\alpha_1, \ldots, \alpha_n, 0}}$. If $f$ is a consequence of the commutator $[x_1, x_2] \equiv 0$ then there is nothing to prove. Let us suppose  that $f$ is not a consequence of the commutator. By putting together Lemmas \ref{link between identities of Dn+1 and Dn} and \ref{form of the identities of Dn+1}, it follows that $f$ is a consequence of trace identities of the form
\[
g = \Tr \left ( h (x_1, \ldots, x_k) x_{k+1} \right )
\]
where $h (x_1, \ldots, x_k) \in \Id^{tr}(D_n^{t_{\alpha_1, \ldots, \alpha_n}})$. But then $h (x_1, \ldots, x_k)$ is a consequence of the identities $f_1$, \dots, $f_s$, $[x_1, x_2]$. The proof is now complete. 
\end{proof}

\section{Trace identities on $D_2$}

In this section we focus our attention to the algebra $D_2$ of $2 \times 2$ diagonal matrices over the field $F$. According to Remarks \ref{traces_on_Dn}, \ref{isomorphic Dn}, we can define, up to isomorphism, only the following trace functions on $D_2$:
\begin{enumerate}
\item[1.] 
$t_{\alpha, 0}$, for every $\alpha \in F $, 
\item[2.] 
$t_{\alpha, \alpha}$, for every non-zero $\alpha \in F $, 
\item[3.]
$t_{\alpha, \beta }$, for every distinct non-zero $\alpha$, $\beta \in F $.
\end{enumerate}

The algebras $D_2^{t_{\alpha,0}}$, $D_2^{t_{\alpha,\alpha}}$ and $D_2^{t_{\alpha,\beta}}$ have been extensively studied in \cite{IoppoloKoshlukovLaMattina2021, IoppoloKoshlukovLaMattina2021proc}. In this section we collect the results concerning their trace $T$-ideals. Such results will be employed in the next section, in order to obtain the generators of the trace $T$-ideals of the algebra $D_3$ endowed with all possible degenerate traces. 

Let us start with the case of $D_2^{t_{\alpha,0}}$. Recall that, if $\alpha = 0$, then $D_2^{t_{0,0}}$ is the algebra $D_2$ with zero trace. So $\Id^{tr}(D_2^{t_{0,0}})$ is generated by $[x_1, x_2], \tr(x)$ and $c_n^{tr}(D_2^{t_{0,0}}) = c_n(D_2^{t_{0,0}}) = 1$.  

When $\alpha \neq 0$ we will need the following result.

\begin{Theorem}  \cite[Theorem 8]{IoppoloKoshlukovLaMattina2021}  \label{identities and codimensions of D2 t alpha 0}
The trace $T$-ideal $\Id^{tr} \left (D_2^{t_{\alpha, 0}} \right)$ is generated, as a trace $T$-ideal, by the polynomials:
\begin{itemize}
\item[•] $ f_1 = [x_1,x_2]$,
\item[•] $f_2 = \Tr(x_1) \Tr(x_2) - \alpha \Tr(x_1 x_2)$.
\end{itemize}
Moreover 
\[
c_n^{tr} \left( D_2^{t_{\alpha, 0}} \right) =  2^{n}.
\]
\end{Theorem}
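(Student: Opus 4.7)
The plan is to prove the identity and codimension parts simultaneously by a squeeze argument. Let $I=\langle f_1,f_2\rangle_{T^{tr}}$; a direct substitution $x_i\mapsto a_ie_{11}+b_ie_{22}$, together with $t_{\alpha,0}(ce_{11}+de_{22})=\alpha c$, shows $f_1,f_2\in\Id^{tr}(D_2^{t_{\alpha,0}})$, whence $I\subseteq\Id^{tr}(D_2^{t_{\alpha,0}})$ and consequently
\[
\dim\bigl(MT_n/(MT_n\cap I)\bigr)\;\geq\;c_n^{tr}(D_2^{t_{\alpha,0}}).
\]
It therefore suffices to prove the upper bound $\dim(MT_n/(MT_n\cap I))\leq 2^n$ together with the lower bound $c_n^{tr}(D_2^{t_{\alpha,0}})\geq 2^n$; matching both bounds to $2^n$ forces equality throughout and, in particular, $I=\Id^{tr}(D_2^{t_{\alpha,0}})$.

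For the upper bound I would first use $f_1$ to rearrange variables both inside and outside trace symbols in a multilinear monomial, reducing it to the form $\Tr(M_1)\cdots\Tr(M_k)\cdot N$, where $M_1,\dots,M_k,N$ are determined by an unordered partition of $\{x_1,\dots,x_n\}$. Iterating $f_2$ in the form $\Tr(A)\Tr(B)\equiv\alpha\,\Tr(AB)\pmod I$ then collapses all $k\geq 1$ trace factors into one, producing the normal form
\[
\alpha^{k-1}\,\Tr\!\Bigl(\prod_{i\in S}x_i\Bigr)\prod_{j\notin S}x_j,
\]
where $S\subseteq\{1,\dots,n\}$ records which variables lie inside the trace. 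Together with the trace-free monomial $x_1\cdots x_n$ (the case $S=\emptyset$), this yields a spanning set of exactly $2^n$ elements of $MT_n/(MT_n\cap I)$.

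For the lower bound, I would check that those same $2^n$ monomials are linearly independent modulo $\Id^{tr}(D_2^{t_{\alpha,0}})$ by evaluating on $x_i=a_ie_{11}+b_ie_{22}$ with formal scalars $a_i,b_i$. The $e_{22}$-entry of
\[
\lambda_\emptyset\,x_1\cdots x_n+\sum_{\emptyset\neq S\subseteq\{1,\dots,n\}}\lambda_S\,\Tr\!\Bigl(\prod_{i\in S}x_i\Bigr)\prod_{j\notin S}x_j
\]
evaluates to
\[
\lambda_\emptyset\prod_{i=1}^n b_i+\alpha\sum_{\emptyset\neq S}\lambda_S\prod_{i\in S}a_i\prod_{j\notin S}b_j,
\]
a sum over the $2^n$ pairwise distinct (hence linearly independent) multilinear monomials in $F[a_1,\dots,a_n,b_1,\dots,b_n]$. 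Since $\alpha\neq 0$, vanishing forces every $\lambda_S=0$, as required.

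The main obstacle is combinatorial rather than conceptual: one must check carefully that the two reduction moves coming from $f_1$ and $f_2$, applied in the order ``commute first, then collapse traces'', suffice to bring \emph{every} mixed multilinear monomial to the normal form above, with no hidden relations arising from the interplay between the pure-trace part and the product of variables sitting outside the traces. Once the normal form is secured the diagonal evaluation above is automatic, and the squeeze identifies $\Id^{tr}(D_2^{t_{\alpha,0}})$ with $I$ and $c_n^{tr}(D_2^{t_{\alpha,0}})$ with $2^n$.
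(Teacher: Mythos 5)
Your argument is correct and matches the standard method of this paper (and of the cited source): verify the generators are identities, reduce every multilinear trace monomial to the $2^n$ normal forms $\Tr\bigl(\prod_{i\in S}x_i\bigr)\prod_{j\notin S}x_j$ using $f_1$ and the collapse $\Tr(A)\Tr(B)\equiv\alpha\Tr(AB)$, and prove independence by reading off the $(2,2)$-entry on generic diagonal matrices, which squeezes both the $T^{tr}$-ideal and the codimension. The theorem is quoted here from \cite{IoppoloKoshlukovLaMattina2021} without proof, but your route is essentially the one used for the analogous results (e.g.\ Theorems \ref{identities and codimensions of D3 t alpha alpha 0} and \ref{identities of Ck t alfa 0}).
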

Similarly we need the following theorems from \cite{IoppoloKoshlukovLaMattina2021}.
\begin{Theorem} \cite[Theorem 9]{IoppoloKoshlukovLaMattina2021}  \label{identities of D2 talpha alpha}
The trace $T$-ideal $\Id^{tr}\left(D_2^{t_{\alpha, \alpha}}\right)$ is generated, as a trace $T$-ideal, by the polynomials:
\begin{itemize}
\item[•] $ f_1 = [x_1,x_2]$,
\item[•] $ f_3 = \Tr(x_1)\Tr(x_2) + \alpha^2 x_1x_2 + \alpha^2 x_2 x_1 - \alpha \Tr(x_1)x_2 - \alpha \Tr(x_2)x_1 - \alpha \Tr(x_1 x_2) $.
\end{itemize}
Moreover 
\[
c_n^{tr} \left( D_2^{t_{\alpha, \alpha}} \right)= 2^n.
\]
\end{Theorem}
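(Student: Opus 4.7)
The plan is a sandwich argument: show $c_n^{tr}(D_2^{t_{\alpha,\alpha}}) \le 2^n$ using the stated polynomials $f_1, f_3$, and $c_n^{tr}(D_2^{t_{\alpha,\alpha}}) \ge 2^n$ by direct evaluation; the coinciding bounds then simultaneously fix the codimension and force $\Id^{tr}(D_2^{t_{\alpha,\alpha}}) = \langle f_1, f_3 \rangle_{T^{tr}}$. Checking that $f_1, f_3 \in \Id^{tr}(D_2^{t_{\alpha,\alpha}})$ is routine: $f_1$ follows from the commutativity of $D_2$, while for $f_3$ I would substitute $x_1 = a e_{11} + b e_{22}$, $x_2 = c e_{11} + d e_{22}$, apply $\tr(e_{ii}) = \alpha$, and verify that the $e_{11}$- and $e_{22}$-components of $f_3(x_1, x_2)$ both collapse to $0$ identically in $a,b,c,d$.

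For the upper bound, set $I := \langle f_1, f_3 \rangle_{T^{tr}}$. Modulo $f_1$ all variables commute, so every multilinear monomial in $MT_n$ takes the form $\Tr(M_1) \cdots \Tr(M_s) \cdot N$ with $M_1, \dots, M_s, N$ index-ordered products of disjoint variable blocks. Substituting $x_1 \mapsto M_1$, $x_2 \mapsto M_2$ into $f_3$ gives
\begin{equation*}
\Tr(M_1)\Tr(M_2) \equiv \alpha\,\Tr(M_1) M_2 + \alpha\,\Tr(M_2) M_1 + \alpha\,\Tr(M_1 M_2) - \alpha^2\bigl(M_1 M_2 + M_2 M_1\bigr) \pmod{I},
\end{equation*}
and each summand on the right carries at most one trace built from the variables of $M_1 \cup M_2$, dropping the total number of trace factors by at least one. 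By induction on $s$ every multilinear monomial is congruent modulo $I$ to a linear combination of the $2^n$ canonical forms
\begin{equation*}
x_1 x_2 \cdots x_n,\qquad \Tr\Bigl(\prod_{i \in S} x_i \Bigr) \prod_{j \notin S} x_j \quad (\emptyset \ne S \subseteq \{1, \dots, n\}),
\end{equation*}
so $\dim MT_n / (MT_n \cap I) \le 2^n$.

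For the lower bound I would evaluate $x_i = e_{\epsilon_i \epsilon_i}$ for the $2^n$ strings $\epsilon \in \{1, 2\}^n$. Since $e_{11}, e_{22}$ are orthogonal idempotents, the canonical form labelled by $S$ takes a non-zero value in $D_2$ only when $\epsilon$ is constant on $S$ and on $S^c$; writing $T := \epsilon^{-1}(1)$, this occurs exactly when $S \in \{T, T^c\}$, with the extremal case $T = \{1, \dots, n\}$ producing a non-zero value for every $S$ (of $e_{11}$-component $\alpha$ for $S \ne \emptyset$ and $1$ for $S = \emptyset$). Indexing rows by $T$, pairing row $T$ with column $T^c$, and placing the row $T = \{1, \dots, n\}$ last, the $2^n \times 2^n$ matrix of $e_{11}$-components is upper triangular with diagonal $(\alpha, \alpha, \dots, \alpha, 1)$, so its determinant is $\alpha^{2^n - 1} \ne 0$. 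Hence the canonical forms are linearly independent modulo $\Id^{tr}(D_2^{t_{\alpha,\alpha}})$ and $c_n^{tr}(D_2^{t_{\alpha,\alpha}}) \ge 2^n$.

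Combining $2^n \le c_n^{tr}(D_2^{t_{\alpha,\alpha}}) \le \dim MT_n/(MT_n \cap I) \le 2^n$ forces equalities throughout; since $F$ has characteristic zero, coincidence of the multilinear components upgrades to $I = \Id^{tr}(D_2^{t_{\alpha,\alpha}})$. The main obstacle is the linear-independence step: naive orderings of the $2^n$ subsets and naive coordinate projections produce dense matrices whose non-singularity is opaque, so some care is needed in the row/column ordering and in projecting onto the $e_{11}$-component (rather than $e_{22}$ or a symmetric combination) so that the evaluation matrix becomes visibly triangular. The upper-bound reduction is essentially bookkeeping once one recognises that the entire point of $f_3$ is to rewrite any product of two traces in terms of expressions carrying at most one trace.
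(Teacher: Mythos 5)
This statement is quoted from \cite[Theorem 9]{IoppoloKoshlukovLaMattina2021} and the paper gives no proof of it, so there is nothing to compare line by line; your span-plus-independence argument is exactly the method the paper uses for the analogous results (Theorems \ref{identities of C2 t 0,1}, \ref{identities of Ck t alfa 0}, \ref{identities and codimensions of D3 t alpha alpha 0}), and it is correct: $f_3$ reduces any product of two traces to terms with at most one trace, giving the $2^n$ canonical forms, and evaluation on tuples of the idempotents $e_{11},e_{22}$ suffices by multilinearity. Two cosmetic points only: for the constant evaluation $\epsilon\equiv 2$ (i.e.\ $T=\emptyset$) the form indexed by \emph{every} $S$ is non-zero in $D_2$, not just $S\in\{T,T^c\}$, but after projecting to the $e_{11}$-component only the diagonal entry $S=\{1,\dots,n\}$ survives, so the triangular structure is unaffected; and with your ordering the matrix is lower rather than upper triangular, which changes nothing in the determinant computation.
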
 

\begin{Theorem} \cite[Theorem 11]{IoppoloKoshlukovLaMattina2021}  \label{identities and codimensions of D2 talpha beta}
The trace $T$-ideal $\Id^{tr} \left( D_2^{t_{\alpha, \beta}} \right) $ is generated, as a trace $T$-ideal, by the polynomials:
\begin{align*}
f_1 &= [x_1,x_2],\\
f_4 &= \Tr(x_1) \Tr(x_2) x_3 - \Tr(x_1) \Tr(x_2 x_3) - x_1 \Tr(x_2) \Tr(x_3) +  \Tr(x_1 x_2) \Tr(x_3) \\ &- (\alpha + \beta)  \Tr(x_1 x_2) x_3 + (\alpha + \beta) x_1 \Tr(x_2 x_3) ,\\
f_5 &= \Tr(x_1) \Tr(x_2) \Tr(x_3) - (\alpha + \beta)  \Tr(x_1) \Tr(x_2 x_3)  - (\alpha + \beta) x_1 \Tr(x_2) \Tr(x_3) \\ 
&+ \alpha \beta  \Tr(x_1) x_2 x_3 + \alpha \beta x_1 x_3 \Tr(x_2) + \alpha \beta x_1 x_2 \Tr(x_3) \\
&+ \alpha \beta \Tr(x_1 x_2 x_3) - \alpha \beta x_3 \Tr(x_1 x_2)  - \alpha \beta x_2 \Tr(x_1 x_3) \\
&+ (\alpha^2 + \alpha \beta + \beta^2) x_1 \Tr(x_2 x_3)  - (\alpha \beta^2 + \alpha^2 \beta) x_1 x_2 x_3.
\end{align*}
Moreover 
\[
c_n^{tr} \left( D_2^{t_{\alpha, \beta}} \right) =  2^{n+1}-n-1.
\]
\end{Theorem}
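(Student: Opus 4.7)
My plan is to establish the theorem in three phases: verify that $f_1$, $f_4$, $f_5$ are trace identities on $D_2^{t_{\alpha,\beta}}$, give an upper bound $c_n^{tr} \le 2^{n+1}-n-1$ by reducing multilinear trace polynomials modulo these identities to a normal form, and match this with a lower bound via explicit evaluations on the standard matrix basis.

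\textbf{Verifying the identities.} Commutativity of $D_2$ immediately yields $f_1 = [x_1,x_2] \equiv 0$. For $f_4$ and $f_5$, write a generic element $x_i = a_i e_{11} + b_i e_{22}$ and use
\[
\Tr(x_{i_1}\cdots x_{i_k}) = \alpha\,a_{i_1}\cdots a_{i_k} + \beta\,b_{i_1}\cdots b_{i_k}, \qquad x_{l_1}\cdots x_{l_c} = (a_{l_1}\cdots a_{l_c}) e_{11} + (b_{l_1}\cdots b_{l_c}) e_{22}.
\]
Each of $f_4$, $f_5$ then splits as two polynomials in $a_1,\dots,a_n,b_1,\dots,b_n$ (its $e_{11}$- and $e_{22}$-components); one verifies that both vanish identically. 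Since each $f_i$ is multilinear, it is in fact enough to check the $2^3=8$ substitutions $x_j\in\{e_{11},e_{22}\}$ for $j=1,2,3$.

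\textbf{Upper bound.} Working modulo $f_1$, every multilinear trace monomial in $x_1,\dots,x_n$ is (uniquely up to ordering) determined by a set partition of $\{1,\dots,n\}$ together with a distinguished (possibly empty) block $L$ recording the variables that lie outside all traces, so $MT_n$ is spanned modulo commutativity by $B_{n+1}$ monomials (Bell number). I would now use $f_4$ and $f_5$, together with all their consequences — permutations of the variables, multiplication by extra free variables or trace factors, and substitutions inside larger traces — as rewriting rules. The identity $f_4$ eliminates a product of two traces times a single outside variable in favour of other shapes, and $f_5$ similarly reduces a product of three traces (mixed with outside products). A suitable termination order on monomials (e.g.\ first by the number of trace blocks, then by total trace degree, then lexicographically) should produce a normal-form spanning set in which each monomial is of restricted shape (at most one non-singleton trace factor together with a controlled outside product); counting these normal forms should give exactly $2^{n+1}-n-1$. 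Note that for $n\le 2$ there is nothing to reduce and $B_{n+1} = 2^{n+1}-n-1$ already, so the real content appears for $n\ge 3$.

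\textbf{Lower bound and main obstacle.} To show the normal forms of Phase 2 are linearly independent modulo $\Id^{tr}(D_2^{t_{\alpha,\beta}})$, I would evaluate them on assignments $x_i \in \{e_{11}, e_{22}\}$ and read off the two diagonal entries of the resulting element of $D_2$; because $\alpha\neq\beta$ are both non-zero, the resulting matrix of evaluations has a Vandermonde-type structure of maximal rank $2^{n+1}-n-1$. The main obstacle is the combinatorial reduction in Phase 2: the space of consequences of $f_4$ and $f_5$ under the $S_n$-action and under multiplication is rich, so one must carefully design a rewriting strategy that both terminates and cancels down to exactly $2^{n+1}-n-1$ surviving monomials — not merely to some weaker upper bound. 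Matching this count precisely, rather than just producing some polynomial bound, is the delicate bookkeeping step.
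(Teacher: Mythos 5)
First, note that the paper does not actually prove this statement: it is imported verbatim from \cite[Theorem 11]{IoppoloKoshlukovLaMattina2021}, so there is no in-paper proof to compare against. The closest internal analogues are the proofs of Theorem \ref{identities of C2 t 0,1} and Theorem \ref{identities of D3 t alpha beta 0}, and your three-phase strategy (verify the identities, span by a normal form, prove linear independence by evaluation) is exactly the strategy those proofs follow. So your overall architecture is sound and standard.

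However, as written your argument has two genuine gaps, both in steps you yourself flag as ``should'' work. In Phase 2 you never exhibit the normal form: you describe it only as ``at most one non-singleton trace factor together with a controlled outside product'' and assert that counting it gives $2^{n+1}-n-1$. The count $2^{n+1}-n-1 = 2^n + (2^n-n-1)$ strongly suggests the spanning set is the $2^n$ monomials $\Tr(x_{i_1}\cdots x_{i_k})x_{j_1}\cdots x_{j_{n-k}}$ with a single (possibly empty) trace, together with $2^n-n-1$ further monomials indexed by subsets of size at least $2$ (compare the explicit sets \eqref{non identities of C2 t 1,0} and \eqref{monomials with 2 traces t a,b,0} in the paper); until you pin this set down and actually carry out the rewriting with $f_4$, $f_5$ and their substitution consequences, you have no upper bound at all, only the trivial Bell-number bound. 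In Phase 3 the claim that evaluations at $x_i\in\{e_{11},e_{22}\}$ produce ``a Vandermonde-type structure of maximal rank'' is not an argument and is not obviously true: under such substitutions every trace factor evaluates to one of $\alpha$, $\beta$ or $0$ (the mixed products have trace $0$ since $e_{11}e_{22}=0$), so many distinct normal-form monomials collapse, and the resulting evaluation matrix is far from Vandermonde. The paper's own independence arguments (see the proof of Theorem \ref{identities of C2 t 0,1}) instead substitute \emph{generic} diagonal matrices $x_i=\mathrm{diag}(a_i,b_i)$ and compare coefficients of monomials in the commuting indeterminates $a_i,b_i$ in the two diagonal entries; you would need to do something of that kind, and this is where the hypotheses $\alpha\neq\beta$, $\alpha\beta\neq 0$ must actually enter. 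As it stands, both the exact upper bound and the lower bound are asserted rather than proved.
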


\section{The algebra $D_3$ endowed with a degenerate trace}

In this section we deal with the algebra $D_3$ of $3 \times 3$ diagonal matrices over the field $F$ endowed with all possible degenerate traces. By taking into account the results of Section 3, it is easy to see that on $D_3$, up to isomorphism, it is possible to define the following kinds of degenerate trace functions:
\begin{itemize}
\item[1.] 
$t_{\alpha, 0, 0}$, for every $\alpha \in F $, 
\item[2.] 
$t_{\alpha, \alpha, 0}$, for every non-zero $\alpha \in F $, 
\item[3.] 
$t_{\alpha, \beta, 0}$, for every distinct non-zero $\alpha$, $\beta \in F $.
\end{itemize}

We shall use the results of Section 4 (in particular Theorem \ref{identities of Dn+1 from those of Dn}) in order to obtain the generators of the trace $T$-ideals of such algebras starting from those of the algebra $D_2$ with the corresponding trace (presented in the previous section).

Let us begin with the algebra $D_3^{t_{\alpha,0,0}}$. Notice that if $\alpha = 0$, then $D_3^{t_{0,0,0}}$ is the algebra $D_3$ with zero trace. So in this case $\Id^{tr}(D_3^{t_{0,0,0}})$ is generated by the commutator $[x_1, x_2]$ and $\tr(x)$. For the codimensions we have $c_n^{tr}(D_3^{t_{0,0,0}}) = c_n(D_3^{t_{0,0,0}}) = 1$. 

Now suppose that $\alpha \neq 0$. In \cite[Theorem 6]{IoppoloKoshlukovLaMattina2021proc} it was proved that $\Id^{tr}(D_3^{t_{\alpha,0,0}})=\Id^{tr}(D_2^{t_{\alpha,0}}).$

Actually it was given the following  general result. 

\begin{Theorem} \label{identities and codimensions of Dn t alpha 0 ... 0}
Let $\alpha \in F \setminus \{ 0 \}$. Then $\Id^{tr}	\left ( D_n^{t_{\alpha, 0, \ldots, 0}} \right ) $ is generated, as a trace $T$-ideal, by the polynomials:
\begin{itemize}
\item[•] $f_1 = [x_1,x_2]$,
\item[•] $f_2 = \Tr(x_1) \Tr(x_2) - \alpha \Tr(x_1 x_2)$.
\end{itemize}
Moreover 
\[
c_n^{tr}(D_n^{t_{\alpha, 0, \ldots, 0}}) =  2^{n}.
\]
\end{Theorem}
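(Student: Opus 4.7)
The plan is to reduce the result to the already-established $n=2$ case by exhibiting a natural trace-preserving embedding of $D_2^{t_{\alpha,0}}$ into $D_n^{t_{\alpha,0,\ldots,0}}$. Once we have this, the easy verification that $f_1$ and $f_2$ are identities of $D_n^{t_{\alpha,0,\ldots,0}}$, combined with $\Id^{tr}(D_2^{t_{\alpha,0}})=\langle f_1,f_2\rangle_{T^{tr}}$ from Theorem \ref{identities and codimensions of D2 t alpha 0}, will sandwich $\Id^{tr}(D_n^{t_{\alpha,0,\ldots,0}})$ into the desired shape.

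First I would check that $f_1$ and $f_2$ lie in $\Id^{tr}(D_n^{t_{\alpha,0,\ldots,0}})$: commutativity of $D_n$ disposes of $f_1$, and for $a=\mbox{diag}(a_1,\ldots,a_n)$, $b=\mbox{diag}(b_1,\ldots,b_n)$, the relation $t_{\alpha,0,\ldots,0}(a)=\alpha a_1$ gives $\Tr(a)\Tr(b)=\alpha^2 a_1 b_1=\alpha\,\Tr(ab)$, killing $f_2$. Then I would introduce the map $\iota\colon D_2\to D_n$ defined by $\iota(\mbox{diag}(a_1,a_2))=\mbox{diag}(a_1,a_2,0,\ldots,0)$. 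This is an injective (non-unital) algebra homomorphism that preserves traces, since $t_{\alpha,0,\ldots,0}(\iota(a))=\alpha a_1=t_{\alpha,0}(a)$. Because $\iota$ is multiplicative, linear, and trace-preserving, a routine expansion monomial by monomial yields $f(\iota(a_1),\ldots,\iota(a_k))=\iota\!\bigl(f(a_1,\ldots,a_k)\bigr)$ for every multilinear trace polynomial $f$ and every choice of $a_i\in D_2$. Hence any trace identity of $D_n^{t_{\alpha,0,\ldots,0}}$ pulls back through the injective $\iota$ to an identity of $D_2^{t_{\alpha,0}}$, producing the reverse inclusion $\Id^{tr}(D_n^{t_{\alpha,0,\ldots,0}})\subseteq\Id^{tr}(D_2^{t_{\alpha,0}})$.

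Chaining $\langle f_1,f_2\rangle_{T^{tr}}\subseteq\Id^{tr}(D_n^{t_{\alpha,0,\ldots,0}})\subseteq\Id^{tr}(D_2^{t_{\alpha,0}})=\langle f_1,f_2\rangle_{T^{tr}}$ closes the argument on the level of ideals, and the codimension assertion follows immediately: for every $k\ge 1$, $c_k^{tr}(D_n^{t_{\alpha,0,\ldots,0}})=c_k^{tr}(D_2^{t_{\alpha,0}})=2^k$ by Theorem \ref{identities and codimensions of D2 t alpha 0}. The only point worth emphasizing is why Theorem \ref{identities of Dn+1 from those of Dn} does not apply directly here: that theorem requires the first $n$ trace coefficients to be nonzero, so it cannot be iterated to pass from $D_n^{t_{\alpha,0,\ldots,0}}$ to $D_{n+1}^{t_{\alpha,0,\ldots,0}}$ once trailing zeros are already present. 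The embedding $\iota$ sidesteps this obstruction by transporting identities in the opposite direction, from the larger algebra down to the $n=2$ algebra whose $T^{tr}$-ideal is already described.
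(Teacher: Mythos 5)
Your overall strategy (reduce to the $n=2$ case via a trace-preserving embedding) is sound, and it is worth noting that the paper itself gives no proof here — it simply cites an external reference — so a self-contained argument is welcome. However, there is a genuine flaw in the key step as written. The map $\iota(\mbox{diag}(a_1,a_2))=\mbox{diag}(a_1,a_2,0,\ldots,0)$ is \emph{not unital}, and the claimed identity $f(\iota(a_1),\ldots,\iota(a_k))=\iota(f(a_1,\ldots,a_k))$ is false whenever $f$ contains a pure trace monomial: such a monomial evaluates to a scalar multiple of the unit of the \emph{ambient} algebra, and $\iota(I_2)=\mbox{diag}(1,1,0,\ldots,0)\neq I_n$. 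For instance $f=\Tr(x_1)$ gives $f(\iota(a))=\alpha a_1 I_n$ while $\iota(f(a))=\alpha a_1\,\mbox{diag}(1,1,0,\ldots,0)$. Since the pullback of identities is exactly the point of the argument, and since arbitrary multilinear trace identities do contain pure trace parts, this step does not go through as stated.

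The gap is repairable in either of two ways. The cleaner fix is to replace $\iota$ by the \emph{unital} trace-preserving embedding $\iota'(\mbox{diag}(a_1,a_2))=\mbox{diag}(a_1,a_2,a_2,\ldots,a_2)$; one checks that $\iota'$ is an injective algebra homomorphism with $t_{\alpha,0,\ldots,0}(\iota'(a))=\alpha a_1=t_{\alpha,0}(a)$ and $\iota'(I_2)=I_n$, so now monomial-by-monomial expansion really does give $f(\iota'(\vec a))=\iota'(f(\vec a))$ for every multilinear trace polynomial, and injectivity yields $\Id^{tr}(D_n^{t_{\alpha,0,\ldots,0}})\subseteq\Id^{tr}(D_2^{t_{\alpha,0}})$. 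Alternatively, keep your $\iota$ but split $f=f_{p}+f_{m}$ into its pure trace part and its mixed part: then $f(\iota(\vec a))=f_{p}(\iota(\vec a))I_n+\iota(f_{m}(\vec a))$, and since $\iota(f_{m}(\vec a))$ has zero $(n,n)$ entry while $I_n$ does not, vanishing in $D_n$ forces both $f_{p}(\vec a)=0$ and, by injectivity, $f_{m}(\vec a)=0$, whence $f(\vec a)=0$ in $D_2$. With either repair the sandwich $\langle f_1,f_2\rangle_{T^{tr}}\subseteq\Id^{tr}(D_n^{t_{\alpha,0,\ldots,0}})\subseteq\Id^{tr}(D_2^{t_{\alpha,0}})=\langle f_1,f_2\rangle_{T^{tr}}$ and the codimension count are correct, and your observation that Theorem \ref{identities of Dn+1 from those of Dn} cannot be iterated once trailing zeros appear is accurate.
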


Concerning the algebra $D_3^{t_{\alpha, \alpha, 0}}$, the generators of the trace $T$-ideal of identities have been already found in \cite[Theorem 7]{IoppoloKoshlukovLaMattina2021proc} by using different methods. Here we prove once again such a result by making use of Theorem \ref{identities of Dn+1 from those of Dn}. Moreover we compute the trace codimension sequence of the algebra $D_3^{t_{\alpha, \alpha, 0}}$. 

First recall that the Stirling numbers of the second type $S(n, k)$ count the ways to partition a set of $n$ objects into $k$ non-empty subsets (see, for instance, \cite{GrahamKnuthPatashnik1994} for more details). In the following result we collect some well-known properties of the Stirling numbers of the second type.

\begin{Lemma} \label{stirling numbers properties}
For the Stirling numbers of the second type we have that:
\begin{itemize}
\item[1.] 
$\displaystyle S(n, k) = \dfrac{1}{k!} \sum_{i= 0}^k (-1)^{k-i} \binom{k}{i} i^{n}$,
\item[2.] 
$\displaystyle S(n+1, k+1) = \sum_{t=k}^n \binom{n}{t} S(t,k)$.
\end{itemize}
\end{Lemma}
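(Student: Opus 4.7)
My plan is to prove the two identities by standard combinatorial arguments, treating each as an independent counting identity; no induction or generating function machinery is really needed.

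For item (1), I would use the surjection interpretation of the Stirling numbers. Because an ordered partition of $\{1,\dots,n\}$ into $k$ non-empty blocks is the same data as a surjection $[n]\to[k]$, the number of surjections from an $n$-set to a $k$-set equals $k!\,S(n,k)$. Independently, inclusion–exclusion on the complement of the image gives that the number of such surjections is
\[
\sum_{i=0}^{k}(-1)^{k-i}\binom{k}{i} i^{n},
\]
where $i^n$ counts arbitrary maps from $[n]$ into a chosen $i$-subset of $[k]$. Dividing by $k!$ yields the claim. The main things to check carefully are the signs (which come from the standard inclusion–exclusion sieve) and that the boundary terms $i=0$ cause no issue (they contribute only when $n=0$).

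For item (2), I would give a direct bijective/partitioning argument. To count partitions of $\{1,\dots,n+1\}$ into $k+1$ non-empty blocks, I condition on the block $B$ containing the element $n+1$. Writing $|B|=n+1-t$, so that $t$ counts the elements of $\{1,\dots,n\}$ lying outside $B$, the element $n+1$ together with a chosen subset of $\{1,\dots,n\}$ of size $n-t$ forms $B$, contributing $\binom{n}{n-t}=\binom{n}{t}$ choices; the remaining $t$ elements must then be split into the other $k$ non-empty blocks, giving $S(t,k)$ further choices. Since $B$ must have at least one element (automatic) and the other $k$ blocks must be non-empty, $t$ ranges over $k\le t\le n$. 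Summing gives
\[
S(n+1,k+1)=\sum_{t=k}^{n}\binom{n}{t}S(t,k).
\]

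Neither step is a real obstacle; these are textbook identities and the proofs fit comfortably in a couple of lines each. The only mild subtlety is verifying that the summation ranges are correct (in particular, that allowing $t<k$ in (2) would be harmless since $S(t,k)=0$ there, so one could equally sum from $t=0$), and that the $i=0$ term in (1) is handled consistently with the convention $0^0=1$ when $n=0$. If one prefers a unified treatment, both identities can alternatively be derived from the exponential generating function $\sum_{n\ge k}S(n,k)\frac{x^n}{n!}=\frac{(e^x-1)^k}{k!}$, but the combinatorial proofs above are cleaner and self-contained.
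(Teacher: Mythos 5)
Your proof is correct. The paper itself gives no argument for this lemma: it simply states these as well-known properties of the Stirling numbers of the second kind and cites Graham--Knuth--Patashnik, so there is no proof in the paper to compare against. Your two combinatorial arguments --- the surjection count with inclusion--exclusion for item (1), and conditioning on the block containing $n+1$ for item (2) --- are the standard textbook derivations and are carried out correctly, including the correct handling of the summation range (the terms with $t<k$ vanish since $S(t,k)=0$ there) and of the binomial coefficient $\binom{n}{n-t}=\binom{n}{t}$.
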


\begin{Theorem} \label{identities and codimensions of D3 t alpha alpha 0}
Let $\alpha \in F \setminus \{ 0 \}$. The trace $T$-ideal $\Id^{tr} \left( D_3^{t_{\alpha, \alpha, 0}} \right)$ is generated, as a trace $T$-ideal, by the polynomials:
\begin{itemize}
\item[•] 
$f_1 = [x_1,x_2]$,
\item[•] 
$g_3 = \Tr(x_1) \Tr(x_2) \Tr(x_3) + 2 \alpha^2 \Tr(x_1 x_2 x_3) - \alpha \Tr(x_1) \Tr(x_2 x_3) - \alpha \Tr(x_2) \Tr(x_1 x_3) - \alpha \Tr(x_1 x_2) \Tr(x_3)$.
\end{itemize}
Moreover 
\[
c_n^{tr} \left( D_3^{t_{\alpha, \alpha, 0}} \right) =  \sum_{k=0}^n \binom{n}{k} + 
\sum_{t=2}^n \binom{n}{t} S(t,2) = \dfrac{3^n+1}{2}.
\]
\end{Theorem}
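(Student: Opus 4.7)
The generating set comes straight from Section~4. Theorem \ref{identities of D2 talpha alpha} provides $\Id^{tr}(D_2^{t_{\alpha, \alpha}}) = \langle [x_1, x_2], f_3 \rangle_{T^{tr}}$, so applying Theorem \ref{identities of Dn+1 from those of Dn} with $n = 2$ and $\alpha_1 = \alpha_2 = \alpha$ yields $\Id^{tr}(D_3^{t_{\alpha, \alpha, 0}}) = \langle [x_1, x_2], \Tr(f_3(x_1, x_2)\, x_3) \rangle_{T^{tr}}$. The plan for verifying the stated form of the second generator is to expand $\Tr(f_3 \cdot x_3)$ using $\Tr(\Tr(M) N) = \Tr(M) \Tr(N)$ to move scalar traces out of the outer trace, and using $[x_1, x_2] \equiv 0$ to collapse $\Tr(x_1 x_2 x_3) + \Tr(x_2 x_1 x_3) = 2\Tr(x_1 x_2 x_3)$; this produces $g_3$ on the nose.

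For the codimension I will build a basis of $MT_n / (MT_n \cap \Id^{tr}(D_3^{t_{\alpha,\alpha,0}}))$. Working modulo $[x_1, x_2]$, every multilinear element is a linear combination of
\[
P_{S, \pi} = \Bigl(\prod_{i \in S} x_i\Bigr) \prod_{B \in \pi} \Tr\Bigl(\prod_{j \in B} x_j\Bigr),
\]
where $S \subseteq \{1, \ldots, n\}$ and $\pi$ is a set partition of $\{1, \ldots, n\} \setminus S$. Applying $g_3$ to any three blocks $B_1, B_2, B_3$ of $\pi$ via the substitutions $y_i \mapsto \prod_{j \in B_i} x_j$ and multiplying by $\prod_{i \in S} x_i$ together with the traces indexed by the remaining blocks rewrites $P_{S, \pi}$ as a combination of monomials with strictly fewer blocks. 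Iterating yields a spanning set consisting of the $P_{S, \pi}$ with $|\pi| \leq 2$.

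The lower bound is the crux. I will evaluate at $x_i = \lambda_i e_{11} + \mu_i e_{22} + \nu_i e_{33}$ for independent indeterminates $\lambda_i, \mu_i, \nu_i$. Since $D_3$ is commutative with $\tr(e_{11}) = \tr(e_{22}) = \alpha$ and $\tr(e_{33}) = 0$, each $P_{S, \pi}$ evaluates to
\[
\alpha^{|\pi|}\Bigl(\prod_{i \in S}\lambda_i \cdot e_{11} + \prod_{i \in S}\mu_i \cdot e_{22} + \prod_{i \in S}\nu_i \cdot e_{33}\Bigr)\prod_{B \in \pi}\Bigl(\prod_{j \in B}\lambda_j + \prod_{j \in B}\mu_j\Bigr),
\]
with the empty $S$-product interpreted as $e_{11} + e_{22} + e_{33}$. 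Assume $\sum c_{S, \pi} P_{S, \pi} \equiv 0$. Specialising $\nu_i = 0$ and reading off the $e_{33}$-coefficient isolates the $S = \emptyset$ summands; the polynomials $\prod_{B \in \pi}(\prod_{j \in B}\lambda_j + \prod_{j \in B}\mu_j)$ for $|\pi| \leq 2$ are visibly independent in $F[\lambda, \mu]$ (the single-block case gives $\prod\lambda + \prod\mu$, while each two-block partition $\{B_1, B_2\}$ contributes the distinct cross-term $\prod_{B_1}\lambda \cdot \prod_{B_2}\mu + \prod_{B_2}\lambda \cdot \prod_{B_1}\mu$), so $c_{\emptyset, \pi} = 0$. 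Returning to the full expansion, the $e_{33}$-coefficient now contains the distinct monomial factor $\prod_{i \in S}\nu_i$ for each nonempty $S$, which separates the remaining $S$-sectors; applying the same $F[\lambda, \mu]$-independence inside each sector forces all $c_{S, \pi} = 0$.

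Finally, the counting. Writing $t = n - |S|$, the $(S, \pi)$ with $|\pi| \leq 1$ contribute $\sum_{k=0}^n \binom{n}{k} = 2^n$ (including the empty partition when $t = 0$), and those with $|\pi| = 2$ contribute $\sum_{t = 2}^n \binom{n}{t} S(t, 2)$. Using $S(t, 2) = 2^{t-1} - 1$ and the binomial theorem, the total simplifies to $(3^n + 1)/2$. The main obstacle is structuring the linear-independence argument so the $S$-sector separation is clean; once the two specialisations (first $\nu_i = 0$, then reading off the $e_{33}$-component of the full relation) are in place, the rest is routine bookkeeping.
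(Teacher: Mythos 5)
Your proposal is correct and follows essentially the same route as the paper: the generators are obtained by applying Theorem~\ref{identities of Dn+1 from those of Dn} to the generator $f_3$ of $\Id^{tr}(D_2^{t_{\alpha,\alpha}})$ from Theorem~\ref{identities of D2 talpha alpha}, and the codimension is computed from the same spanning set of monomials with at most two trace factors, counted by $2^n + \sum_{t=2}^n \binom{n}{t}S(t,2)$. The only difference is that your generic-evaluation argument (separating $S$-sectors via the $\nu$-monomials in the $e_{33}$-entry and then using the independence of the $\prod_{B}(\prod\lambda+\prod\mu)$ factors) explicitly supplies the linear-independence verification that the paper dismisses as ``easy to prove,'' and it is sound.
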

\begin{proof}
Let us consider the polynomial $f_3(x_1, x_2)$ given in Theorem \ref{identities of D2 talpha alpha}. It is easy to see that 
\[
g_3 = \Tr (f_3 (x_1, x_2) x_3).
\] 
Now the first part follows directly from Theorem \ref{identities of Dn+1 from those of Dn}.

We are left to compute the trace codimension sequence. By taking into account the identities $f_1$ and $g_3$ it is easy to prove that the following polynomials form a basis of $MT_n \left( \mbox{mod } MT_n \cap \Id^{tr} \left( D_3^{t_{\alpha, \alpha, 0}} \right) \right)$: 
\begin{equation} \label{monomials with one trace}
\Tr (x_{i_1} \cdots x_{i_k}) x_{j_1} \cdots x_{j_{n-k}}, \ \ \ \ \ k = 0, \ldots, n, \ \ \ i_1 < \cdots < i_k, \ j_1 < \cdots < j_{n-k},
\end{equation}
\begin{equation} \label{monomials with 2 traces}
\Tr (x_{p_1} \cdots x_{p_a}) \Tr (x_{q_1} \cdots x_{q_b}) x_{r_1} \cdots x_{r_{n-a-b}}, \ \ \ \ \ a \geq b \geq 1, \ \ \ p_1 < \cdots < p_a, \ q_1 < \cdots < q_b, \ r_1 < \cdots < r_{n-a-b}.
\end{equation}
It is easy to see that there are exactly $\sum_{k = 0}^n \binom{n}{k} = 2^n$  distinct monomials in \eqref{monomials with one trace}. Now let us focus our attention to the monomials in \eqref{monomials with 2 traces}. Let us denote by $t = a+b \geq 2$ the quantities of variables appearing inside the traces. There are exactly $\binom{n}{t}$ distinct ways to choose the elements appearing inside the traces. For fixed $t$ elements, we have exactly $S(t,2)$ ways to put them inside the two traces. Hence the quantity of monomials in \eqref{monomials with 2 traces} is $\sum_{t=2}^n \binom{n}{t} S(t,2)$. In conclusion
\[
c_n^{tr} \left( D_3^{t_{\alpha, \alpha, 0}} \right) =  \sum_{k=0}^n \binom{n}{k} + 
\sum_{t=2}^n \binom{n}{t} S(t,2).
\]
Now, by Lemma \ref{stirling numbers properties} we obtain that 
\[
c_n^{tr} \left( D_3^{t_{\alpha, \alpha, 0}} \right) = 2^n + S(n+1, 3)  = 2^n + \dfrac{1}{2} \left( 3^n - 2^{n+1} +1 \right) = \dfrac{3^n+1}{2}.
\]
Thus the proof of the theorem is complete.
\end{proof}

By using the same technique, one can prove the following more general result. 

\begin{Remark} \label{number of trace monomials and Stirling numbers}
The number of trace monomials in $n$ commuting variables $x_1$, \dots, $x_n$ with exactly $k$ traces equals 
\[
S(n+1, k+1) = \sum_{t=k}^n \binom{n}{t} S(t,k).
\]
\end{Remark}

We conclude this section by considering the case of $D_3^{t_{\alpha, \beta, 0}}$, $\alpha$, $\beta \neq 0$, $\alpha \neq \beta$. In \cite{IoppoloKoshlukovLaMattina2021proc}, the authors presented just some trace identities satisfied by this algebra but no basis of the trace identities was exhibited. Here we are able to address that problem; we prove the following theorem. 

\begin{Theorem} \label{identities of D3 t alpha beta 0}
The trace $T$-ideal $\Id^{tr} \left( D_3^{t_{\alpha, \beta, 0}} \right)$ is generated, as a trace $T$-ideal, by the polynomials:
\begin{align*}
f_1 &= [x_1,x_2],\\
g_4 &= \Tr(x_1) \Tr(x_2) \Tr(x_3 x_4) - \Tr(x_1)\Tr(x_4)\Tr(x_2 x_3) - \Tr(x_2)\Tr(x_3)\Tr(x_1 x_4) + \Tr(x_3) \Tr(x_4) \Tr(x_1 x_2) \\ 
&- (\alpha + \beta)\Tr(x_1 x_2)\Tr(x_3 x_4) + (\alpha + \beta) \Tr(x_1 x_4) \Tr(x_2 x_3),\\
g_5 &=  \Tr(x_1) \Tr(x_2) \Tr(x_3) \Tr(x_4) -(\alpha+\beta)\Tr(x_1)\Tr(x_4)\Tr(x_2 x_3) - (\alpha+\beta)\Tr(x_2)\Tr(x_3)\Tr(x_1 x_4) \\
&+ \alpha \beta \Tr(x_1) \Tr(x_2 x_3 x_4) 
+ \alpha \beta \Tr(x_2) \Tr(x_1 x_3 x_4) 
+ \alpha \beta \Tr(x_3) \Tr(x_1 x_2 x_4)  \\
&+ \alpha \beta \Tr(x_4) \Tr(x_1 x_2 x_3)
- \alpha \beta \Tr(x_1 x_2)\Tr(x_3 x_4) 
- \alpha \beta \Tr(x_1 x_3)\Tr(x_2 x_4) \\
&+( \beta^2 + \alpha \beta +  \alpha^2) \Tr(x_1 x_4) \Tr(x_2 x_3) 
-(\alpha \beta^2+\alpha^2 \beta)\Tr(x_1 x_2 x_3 x_4). 
\end{align*}
Moreover the codimensions are given by 
\[
c_n^{tr} \left( D_3^{t_{\alpha, \beta, 0}} \right) =  2^n + 
S(n+1,3) + n S(n,3) = \dfrac{1}{2} \left( 3^n + 3^{n-1}n - 2^n n + n +1 \right).
\]
\end{Theorem}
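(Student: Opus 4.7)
My plan divides naturally into two parts, each following the template set by the proof of Theorem \ref{identities and codimensions of D3 t alpha alpha 0}. For the generating set, I would invoke Theorem \ref{identities of Dn+1 from those of Dn} with $n=2$, starting from the generators $\{f_1, f_4, f_5\}$ of $\Id^{tr}(D_2^{t_{\alpha,\beta}})$ supplied by Theorem \ref{identities and codimensions of D2 talpha beta}. That result immediately gives that $\Id^{tr}(D_3^{t_{\alpha,\beta,0}})$ is generated as a $T^{tr}$-ideal by $f_1 = [x_1,x_2]$, $\Tr(f_4(x_1,x_2,x_3)\,x_4)$ and $\Tr(f_5(x_1,x_2,x_3)\,x_4)$, so the only remaining task is to verify, term by term, the identities $g_4 = \Tr(f_4\,x_4)$ and $g_5 = \Tr(f_5\,x_4)$. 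This is a direct computation: a summand of $f_i$ of the form $u\,x_j$, with $u$ a product of trace expressions, contributes $u\,\Tr(x_j\,x_4)$ to $\Tr(f_i\,x_4)$ via $\Tr(\Tr(M)N) = \Tr(M)\Tr(N)$, while a summand $x_j\,v$ with $v$ outside the traces contributes $v\,\Tr(x_j\,x_4)$ after cyclic invariance, and matching with the displayed expressions for $g_4$ and $g_5$ is then routine bookkeeping.

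For the codimension, I plan to exhibit an explicit spanning set of $MT_n/(MT_n \cap \Id^{tr}(D_3^{t_{\alpha,\beta,0}}))$ and then show that it is a basis. Modulo the commutator, every multilinear trace monomial has a canonical commutative form $\Tr(T_1)\cdots\Tr(T_k)\cdot P$, where $T_1,\ldots,T_k,P$ are disjoint subsets of $\{1,\ldots,n\}$ whose union is $\{1,\ldots,n\}$ and the $T_i$ are nonempty; by Remark \ref{number of trace monomials and Stirling numbers} the number of such monomials with exactly $k$ traces is $S(n+1,k+1)$. I would then use $g_5$ together with its $T^{tr}$-closure (obtained by substituting products of variables for $x_1,\ldots,x_4$) to express every monomial containing four or more trace factors in terms of monomials with fewer traces, and $g_4$ together with its closure to cut the three-trace monomials down to a subspace of dimension $n\,S(n,3)$, whose residual elements are naturally parametrized by a distinguished variable together with a set-partition of the remaining indices into three trace blocks, reflecting the precise shape of $g_4$. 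Together with the untouched $\sum_{k=0}^{2} S(n+1,k+1) = 2^n + S(n+1,3)$ monomials of at most two traces, this yields the claimed cardinality $2^n + S(n+1,3) + n\,S(n,3)$.

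The main obstacle will be the lower-bound step, namely showing linear independence of the proposed basis so that no additional trace identity is hidden among the surviving monomials. I would handle this by evaluating the candidate basis on tuples drawn from the matrix units $e_{11}, e_{22}, e_{33}$ of $D_3$ and producing a matrix of evaluation values that has maximal rank, in the same spirit as the independence argument for $D_2^{t_{\alpha,\beta}}$ underlying Theorem \ref{identities and codimensions of D2 talpha beta}; here the hypothesis that $\alpha, \beta$ are nonzero and distinct is essential, as it ensures that the scalars $\alpha,\beta,\alpha\beta,\alpha^2+\alpha\beta+\beta^2$ appearing as evaluation coefficients are pairwise distinguishable. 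Once the basis is established, the closed form $\frac{1}{2}(3^n + 3^{n-1}n - 2^n n + n + 1)$ follows by substituting the explicit Stirling formulas $S(n+1,3) = \frac{1}{2}(3^n - 2^{n+1} + 1)$ and $S(n,3) = \frac{1}{2}(3^{n-1} - 2^n + 1)$, both obtained from Lemma \ref{stirling numbers properties}, and collecting terms.
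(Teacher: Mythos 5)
Your proposal follows essentially the same route as the paper: the generating set is obtained exactly as in the paper by applying Theorem \ref{identities of Dn+1 from those of Dn} to the generators $f_4$, $f_5$ of $\Id^{tr}(D_2^{t_{\alpha,\beta}})$ and checking $g_4=\Tr(f_4x_4)$, $g_5=\Tr(f_5x_4)$, and the codimension is computed from the same spanning set (monomials with at most two traces, plus three-trace monomials with a distinguished singleton trace factor), with linear independence deferred to the same generic-evaluation argument the paper borrows from the $D_2^{t_{\alpha,\beta}}$ case. The only quibble is your phrase ``a set-partition of the remaining indices into three trace blocks'', which literally would give $nS(n-1,3)$ rather than the correct $nS(n,3)$ (one needs a distinguished variable together with a two-trace monomial in the remaining $n-1$ variables, i.e.\ two nonempty trace blocks plus a possibly empty block of variables outside all traces), but since you state the correct count this is only a wording slip.
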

\begin{proof}
Consider the polynomials $f_4$ and $f_5$ in Theorem 	\ref{identities and codimensions of D2 talpha beta}. The first part follows by Theorem \ref{identities of Dn+1 from those of Dn} since one can easily check that 
\[
g_4 = \Tr \left( f_4(x_1, x_2, x_3) x_4 \right) \qquad \mbox{and} \qquad g_5 = \Tr \left( f_5(x_1, x_2, x_3) x_4 \right).
\]
We are left to compute the trace codimension sequence. By using the same approach as in \cite[Theorem $11$]{IoppoloKoshlukovLaMattina2021}, it is not difficult to see that the following polynomials form a basis of $MT_n \left( \mbox{mod } MT_n \cap \Id^{tr} \left( D_3^{t_{\alpha, \beta, 0}} \right) \right)$: 
\begin{equation} \label{monomials with one trace t a,b,0}
\Tr (x_{i_1} \cdots x_{i_k}) x_{j_1} \cdots x_{j_{n-k}}, \ \ \ \ \ k = 0, \ldots, n, \ \ \ i_1 < \cdots < i_k, \ j_1 < \cdots < j_{n-k},
\end{equation}
\begin{equation} \label{monomials with 2 traces t a,b,0}
\Tr (x_{p_1} \cdots x_{p_a}) \Tr (x_{q_1} \cdots x_{q_b}) x_{r_1} \cdots x_{r_{n-a-b}}, \ \ \ \ \ a \geq b \geq 1, \ \ \ p_1 < \cdots < p_a, \ q_1 < \cdots < q_b, \ r_1 < \cdots < r_{n-a-b},
\end{equation}
\begin{equation} \label{monomials with 3 traces t a,b,0}
\Tr( x_s ) \Tr (x_{u_1} \cdots x_{u_c}) \Tr (x_{v_1} \cdots x_{v_d}) x_{t_1} \cdots x_{t_{e}}, \ \ c \geq d \geq 1, \ \ u_1 < \cdots < u_c, \ \ v_1 < \cdots < v_d, \ \ t_1 < \cdots < t_{e}.
\vspace{0.2 cm}
\end{equation}
As we have seen in Theorem \ref{identities and codimensions of D3 t alpha alpha 0}, the number of monomials in \eqref{monomials with one trace t a,b,0} and \eqref{monomials with 2 traces t a,b,0} is $2^n$ and $S(n+1,3)$, respectively. In order to complete the proof we need to show that in \eqref{monomials with 3 traces t a,b,0} there are exactly $nS(n,3)$ elements. 
Notice that we can choose the variable $x_s$ in $n$ distinct ways. Once $x_s$ is selected, we are left with monomials in $n-1$ variables and exactly two traces. Hence their number is $S(n,3)$ and the proof is complete.
\end{proof}

\section{The algebras $C_k^{t_{\alpha,\beta}}$}

In \cite{IoppoloKoshlukovLaMattina2021} we introduced the $F$-algebra
\[
C_2 = 	\left \{ \begin{pmatrix} 
a & b \\
0 & a
\end{pmatrix} \mid a,b \in F \right \}
\]
endowed with the trace $t_{\alpha, \beta}$ defined by
\[
t_{\alpha, \beta} \left ( \begin{pmatrix} 
a & b \\
0 & a
\end{pmatrix} \right ) = \alpha a + \beta b.
\]
We recall that in \cite[Section 2]{Procesi2008} such traces were called \textsl{strange traces}. It is immediate to see that the algebra $C_2$ is isomorphic to the truncated polynomial algebra $F[x]/(x^2)$. 

In \cite{IoppoloKoshlukovLaMattina2021}, we proved that, for any $\beta \neq 0$, $C_2^{t_{\alpha,\beta}}$ is isomorphic as an algebra with trace to $C_2^{t_{\alpha,1}}$. So in order to determine $\Id^{tr} \left( C_2^{t_{\alpha, \beta}} \right)$ it is sufficient to study $C_2^{t_{\alpha, 1}}$. In the following results we shall describe the generators of its trace $T$-ideal. We consider first the case in which $\alpha = 0$. Then we reduce the general case to this particular one.

\begin{Theorem} \label{identities of C2 t 0,1}
The trace $T$-ideal $\Id^{tr} \left( C_2^{t_{0, 1}} \right)$ is generated, as a trace $T$-ideal, by the polynomials:
\begin{itemize}
\item[•] 
$f_1 = [x_1,x_2]$,
\item[•] 
$h_2 = \Tr(x_1) \Tr(x_2)x_3 - \Tr(x_2) \Tr(x_3)x_1 + \Tr(x_1x_2) \Tr(x_3) - \Tr(x_2x_3) \Tr(x_1)$,
\item[•] 
$h_3 = -\Tr(x_1x_2)x_3 - \Tr(x_1x_3)x_2 - \Tr(x_2x_3)x_1 + \Tr(x_1x_2x_3) +\Tr(x_1)x_2x_3 + \Tr(x_2)x_1x_3 + \Tr(x_3)x_1x_2$.
\end{itemize}
Moreover 
\[
c_n^{tr} \left( C_2^{t_{0, 1}} \right) = 2^{n+1} - n -1.
\]
\end{Theorem}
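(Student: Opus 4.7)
My plan follows the standard three-stage pattern used throughout the paper.

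First, I would verify that $f_1$, $h_2$, $h_3$ lie in $\Id^{tr}(C_2^{t_{0,1}})$. Write every element of $C_2$ uniquely as $a+bj$ where $j=e_{12}$ satisfies $j^2=0$, $\tr(1)=0$, $\tr(j)=1$. Substituting $x_i=a_i+b_ij$ and expanding gives
\[
x_{i_1}\cdots x_{i_k} \;=\; \prod_{\ell} a_{i_\ell} \;+\; \Bigl(\sum_{\ell} b_{i_\ell}\prod_{m\neq\ell} a_{i_m}\Bigr)\, j,
\]
so $\tr(x_{i_1}\cdots x_{i_k}) = \sum_{\ell} b_{i_\ell}\prod_{m\neq\ell} a_{i_m}$. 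Inserting these expressions into $h_2$ and $h_3$ and comparing the scalar and the $j$-components makes both parts collapse to zero term by term; $f_1$ is immediate by commutativity of $C_2$.

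Second, I would bound $c_n^{tr}(C_2^{t_{0,1}})$ from above by $2^{n+1}-n-1$. Set $I := \langle f_1, h_2, h_3\rangle_{T^{tr}}$. Commutativity lets me arrange the factors inside each trace and inside the product outside all traces. The multilinear consequences of $h_3$, obtained by (i) right-multiplying by further variables and (ii) applying $\Tr(\,\cdot\,y)$ for a fresh variable $y$, let me reduce, by induction on the largest trace-size, any multilinear monomial that contains a trace of length $\geq 3$ to a combination of monomials whose traces all have length $\leq 2$. Analogous consequences of $h_2$ then eliminate a controlled number of two-trace monomials, and a combinatorial count of the resulting distinguished family shows that it has exactly $2^{n+1}-n-1$ elements.

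Third, I would establish the matching lower bound by proving linear independence on $C_2^{t_{0,1}}$ of the distinguished monomials. Viewing each such monomial as a multilinear map $C_2^n\to C_2$ via $x_i\mapsto a_i+b_ij$, its output is a pair consisting of scalar part and $j$-part, each a polynomial in the indeterminates $a_i, b_i$. Ordering these polynomial outputs by the set of indices $i$ for which $b_i$ appears and by the scalar/$j$ dichotomy, distinct monomials of the family produce distinct leading terms, which rules out any nontrivial linear relation. The main obstacle is Step 2: the multilinear consequences of $h_2$ and $h_3$ under right-multiplication, insertion under $\Tr$, and multiplication by pure-trace factors produce many overlapping relations, so one must design a careful reduction algorithm whose termination yields a spanning set of exactly the claimed cardinality. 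Matching this upper bound with the lower bound from Step 3 then guarantees that no further identities are needed to generate the trace $T$-ideal.
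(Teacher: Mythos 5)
Your overall strategy (verify the three identities, exhibit a spanning set modulo $\langle f_1,h_2,h_3\rangle_{T^{tr}}$, prove independence by evaluating on generic matrices $x_i=a_i+b_ij$) is the same as the paper's, and Step 1 is fine. But there are two genuine gaps. First, in Step 2 the operations you list --- right-multiplying by further variables and applying $\Tr(\,\cdot\,y)$ --- do not generate the consequences you actually need: neither of them produces a relation expressing $\Tr(x_1x_2x_3x_4)$ through shorter traces. The reduction requires substituting products of variables for single variables ($x_3\mapsto x_3x_4\cdots x_m$ in $h_3$ to shorten long traces, and $x_1\mapsto uv$ in $h_2$ to merge two traces of degree two), and the outcome of the algorithm has to be pinned down explicitly: the spanning set consists of the monomials $\Tr(x_{i_1}x_{i_2})\Tr(x_{i_3})\cdots\Tr(x_{i_k})x_{j_1}\cdots x_{j_m}$ with $i_1<\cdots<i_k$, i.e.\ at most one trace of degree two, whose entries are forced to be the two smallest traced indices. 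Without naming this family the count $2^n+(2^n-n-1)$ cannot be carried out; you yourself flag this as the unfinished ``main obstacle.''

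Second, and more seriously, the independence argument in Step 3 fails as stated. It is not true that distinct monomials of the family have distinct leading terms under an ordering by the set of indices of the $b$'s and the scalar/$j$ dichotomy: since $\Tr(x_px_q)=a_pb_q+a_qb_p$ contributes two monomials with different $b$-sets, the $(1,1)$-entry monomial $b_{i_1}\cdots b_{i_k}$ (times a product of $a$'s) is produced simultaneously by $\Tr(x_{i_1})\cdots\Tr(x_{i_k})x_{j_1}\cdots$, by $\Tr(x_ix_{i_1})\Tr(x_{i_2})\cdots\Tr(x_{i_k})\cdots$ for every admissible $i<i_1$, and by $\Tr(x_{i_1}x_i)\Tr(x_{i_2})\cdots\Tr(x_{i_k})\cdots$ for $i_1<i<i_2$. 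The evaluation matrix is therefore not triangular in any such ordering, and one must instead write down the full linear system coming from the coefficients of the monomials at positions $(1,1)$ and $(1,2)$ and show that it has only the trivial solution. The paper does this by a subtraction scheme: combining the $(1,2)$-equation for $b_{i_1}\cdots b_{i_k}b_j$ with the $(1,1)$-equations for $b_{i_1}\cdots b_{i_k}$ and its neighbours isolates each two-trace coefficient as $-2\beta(\cdots)=0$, and feeding this back into the $(1,1)$-equations kills the remaining coefficients. This linear-algebra step is the real content of the lower bound and is missing from your proposal.
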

\begin{proof}
A straightforward computation shows that the polynomials $f_1$, $h_2$, and $h_3$ are trace identities for $C_2^{t_{0, 1}}$, hence the inclusion $T = \langle f_1, h_2, h_3 \rangle_{T^{tr}} \subseteq \Id^{tr}(C_2^{t_{0, 1}})$ holds. 

In order to obtain the opposite inclusion, first we shall prove that the polynomials
\begin{equation} \label{non identities of C2 t 1,0}
\Tr(x_{i_1} x_{i_2}) \Tr(x_{i_3}) \cdots \Tr(x_{i_k})  x_{j_1} \cdots x_{j_{m}}, \ \ \ \ \ k+m = n, \ \ \ i_1 < \cdots < i_k, \ \ j_1 < \cdots < j_{m}
\end{equation}
span $MT_n$, modulo $MT_n \cap T$, for every $n \geq 1$. Clearly any one of the three parts (trace of degree $2$, traces with just one variable or variables outside the traces) can be missing. 

By the identity $f_1$ we can order all the variables, inside and outside a trace. By using the identity $h_3$ one gets rid of all traces of monomials of degree $\ge 3$. Hence one is left with products of traces of degree 1 and 2 (and some variables outside traces). Now we use the identity $h_2$: suppose $x_1=uv$ where $u$ and $v$ are variables, and consider the last summand in $h_2$ (which is a product of two traces of degree $2$). The last but one summand becomes $\Tr(uvx_2) \Tr(x_3)$. The first two summands are inoffensive since they contain exactly 2 traces and at most one of those contains 2 variables. Now identity $h_3$ applied to $\Tr(uvx_2)$ gives us only one trace of degree 2. In conclusion we can assume that we have only one trace of degree $2$ (or none), several traces of degree $1$ (or none), and several variables outside traces (or none). By applying the identity $h_2$ (and $f_1$ when necessary), we can take the variable $x_1$ outside the trace of degree 2 and put it into a trace of degree 1 (look at the two last summands). Therefore we have proved that the polynomials in \eqref{non identities of C2 t 1,0} span $MT_n$, modulo $MT_n \cap T$, as desired.

Our next goal is to show that the elements in \eqref{non identities of C2 t 1,0} are linearly independent modulo $ \Id^{tr}(C_2^{t_{0, 1}})$. To this end let 
\[
x_i = \begin{pmatrix}
a_i & b_i \\
0 & a_i
\end{pmatrix}
\]
be generic matrices in $C_2^{t_{0,1}}$. We suppose that $a_i$ and $b_i$ are commuting independent variables. In order to reach our goal, it suffices to show that no non-trivial linear combination of the above elements, evaluated on the generic matrices $x_i$, $1\le i\le n$, vanishes. 

Fix a trace monomial of length $n$. We know that it is of the types
\begin{align*}
u(p_1,\ldots,p_k) &= \Tr(x_{p_1})\cdots \Tr(x_{p_k}) x_{q_1}\cdots x_{q_m},\\
v(p,p_1,\ldots, p_k) &= \Tr(x_p x_{p_1}) \Tr(x_{p_2})\cdots \Tr(x_{p_k}) x_{r_1}\cdots x_{r_s}
\end{align*}
where we have $p_1<\cdots<p_k$, $q_1<\cdots< q_m$, $\{p_1,\ldots,p_k\}\cup \{q_1,\ldots,q_m\} =\{1,\ldots,n\}$. Similarly in the second element we require $p<p_1<\cdots<p_k$, $r_1<\cdots<r_s$, and $\{p,p_1,\ldots,p_k\}\cup \{r_1,\ldots,r_s\}=\{1,\ldots,n\}$, and the unions are disjoint. Clearly all these imply $k+m=n$ for the first and $k+1+s=n$ for the second element.

Suppose we have a linear combination of the above trace elements which is 0 when evaluated on the generic matrices, then it is a trace identity for our algebra. Suppose the combination is not trivial. Let $u(p_1,\ldots,p_k)$ enter in it with coefficient $\alpha(p_1,\ldots,p_k)$, and let $v(p,p_1,\ldots, p_k)$ enter with coefficient $\beta(p,p_1,\ldots, p_k)$.

Evaluating such a linear combination of the above monomials on the generic matrices $x_i$ we obtain a matrix whose entries are polynomials in the variables $a_i$ and $b_i$; clearly the entries at positions $(1,1)$ and $(2,2)$ are equal. Consider a monomial in the $a_i$ and $b_i$, we shall count the entries $b$ in it. We shall consider what appears at the diagonal of the resulting matrix (say at position (1,1)), and afterwards at position (1,2). Clearly as the base field is of characteristic 0 (infinite), by homogeneity we can consider a fixed set of letters $b_i$. 

There are two trivial cases for the monomials at position $(1,1)$. The first is when one has no letter $b$ at all. This monomial comes only from $x_1\cdots x_n$ hence the latter product cannot participate in our combination. In the second case we have the monomial $b_1\cdots b_n$; it comes only from $\Tr(x_1)\cdots \Tr(x_n)$, hence it does not enter our linear combination either.

Let us first consider $u(i_1,\ldots,i_k)$. It will give at position (1,1) the monomial $b_{i_1}\cdots b_{i_k}$. Here we shall omit altogether the product $a_{d_1}\cdots a_{d_m}$ where $k+m=n$, $\{i_1,\ldots,i_m\}\cup \{d_1,\ldots,d_m\}=\{1,\ldots,n\}$. We list all trace monomials that will produce $b_{i_1}\cdots b_{i_k}$ at position (1,1).

\begin{align*}
u(i_1,\ldots,i_k) &\mapsto b_{i_1}\cdots b_{i_k}\\
v(i,i_1,\ldots, i_k)&\mapsto b_{i_1}\cdots b_{i_k}+ b_i b_{i_2}\cdots b_{i_k}, \quad i<i_1\\
v(i_1,i,\ldots, i_k)&\mapsto b_{i_1}\cdots b_{i_k}+b_ib_{i_2}\cdots b_{i_k}, \quad i_1<i<i_2.
\end{align*}
In order to kill the monomial $b_{i_1}\cdots b_{i_k}$ at position (1,1) we must have
\begin{equation}
\label{11}
\alpha(i_1,\ldots,i_k) +\sum_{i=1}^{i_1-1} \beta(i,i_1,\ldots,i_k) + \sum_{i=i_1+1}^{i_2-1} \beta(i_1,i,i_2,\ldots,i_k) = 0.
\end{equation}

Now we analyse the corresponding entries at position (1,2). The two trivial cases discarded (see above), we will have exactly one additional letter $b$. Let $j\notin\{i_1,\ldots,i_k\}$, and consider the monomial $b_{i_1}\cdots b_{i_k}b_j$ at position (1,2). We require $i_1<\cdots<i_k$ only. Fix such an index $j$.

The monomial $b_{i_1}\cdots b_{i_k}b_j$ at position (1,2) comes from the monomial $u(i_1,\ldots, i_k)$, and from the following elements: 

\begin{enumerate}
\item[(a)]
If $j<i_1$, from $u(j, i_2, \ldots, i_k)$;
\item[(b)]
If $j>i_k$, from $u(i_1, \ldots,i_{k-1}, j)$;
\item[(c)]
If $i_{s-1}<j<i_s$ for some $2\le s\le k$, from 
 $u(i_1,\ldots, i_{s-2}, j, i_s,\ldots i_k)$, and $u(i_1,\ldots, i_{s-1}, j, i_{s+1}, \ldots,i_k)$. (Here we plug the index $j$ in place of $i_{s-1}$ and $i_s$, respectively.)
\end{enumerate}

The monomial $b_{i_1}\cdots b_{i_k}b_j$ can also come from the elements
\begin{itemize}
\item[1.]
$v(i,i_1,\ldots,i_k)$, $i<i_1$ and $i\ne j$; 
\item[2.]
$v(i_1,i,i_2,\ldots,i_k)$, $i_1<i<i_2$ and $i\ne j$;
\item[3.]
$v(i,j,i_2,\ldots,i_k)$, $i<j<i_2$ and $i\ne i_1$;
\item[4.]
$v(j,i,i_2,\ldots,i_k)$, $j<i<i_2$ and $i\ne i_1$;
\item[5.]
$v(i,i_1,\ldots, i_{s-2}, j, i_s,\ldots,i_k)$, $i<i_1$, if $i_{s-1}<j<i_s$;
\item[6.]
$v(i_1,i,\ldots, i_{s-2}, j, i_s,\ldots,i_k)$, $i_1 < i < i_2$, if $i_{s-1}<j<i_s$;
\item[7.]
$v(i_1,i,\ldots, i_{s-1}, j, i_{s+1},\ldots,i_k)$, $i_1<i<i_2$ if $i_{s-1}<j<i_s$.
\item[8.]
$v(i,i_1,\ldots, i_{s-1}, j, i_{s+1},\ldots,i_k)$, $i<i_1$ if $i_{s-1}<j<i_s$.
\end{itemize}
Clearly the elements from (1) and (2) enter whenever there exist indices $i$ with the required inequalities. Depending on the position of $j$ among the $i_1$, \dots, $i_k$ we will have (3) or (4) or (5) or (6) or (7) or (8), for some $s$.

Now let $j<i_1$, then we get the equality 
\begin{align*}
\alpha(i_1,\ldots, i_k) +\alpha(j,i_2,\ldots,i_k) + \sum_{i=1}^{i_1-1} \beta(i,i_1,\ldots,i_k) &- \beta(j, i_1,\ldots,i_k) \\
+\sum_{i=i_1+1}^{i_2-1} \beta(i_1,i,i_2,\ldots,i_k) + \sum_{i=1}^{j-1} \beta(i,j, i_2,\ldots,i_k) &+ \sum_{i=j+1}^{i_2-1} \beta(j,i,i_2,\ldots,i_k) - \beta(j, i_1,\ldots,i_k) =0.
\end{align*}
(In the first line we subtract $\beta(j,i_1,\ldots,i_k)$ as it cannot appear: it does not produce the desired element at position (1,2) and the same happens in the second line). 
Subtract from the latter equation the one from (\ref{11}), and then the same from (\ref{11}) but with the indices $(j,i_2,\ldots,i_k)$ instead of $(i_1,\ldots,i_k)$. 
Then every term cancels except for $-2\beta(j,i_1,\ldots,i_k)$. Hence the latter coefficient must be 0.

In a similar way we deal with the case when $i_1<j<i_2$. We obtain  
\begin{align*}
\alpha(i_1, \ldots, i_k) + \alpha(j,i_2,\ldots,i_k) + \alpha(i_1, j, i_3, \ldots, i_k) &+\sum_{i=1}^{i_1-1} \beta(i, i_1, \ldots, i_k)+\sum_{i=i_1+1}^{i_2-1} \beta(i_1, i, i_2, \ldots, i_k)\\
-\beta(i_1, j, i_2, \ldots, i_k) + \sum_{i=1}^{j-1} \beta(i, j, i_2, \ldots, i_k) &- \beta(i_1, j, i_2, \ldots, i_k) + \sum_{i=j+1}^{i_2-1} \beta(j, i, i_2, \ldots, i_k) \\
+ \sum_{i=1}^{i_1-1} \beta(i, i_1, j, i_3, \ldots, i_k) &+\sum_{i=i_1+1}^{j-1} \beta(i_1, i, j, i_3, \ldots, i_k)=0.
\end{align*}
Proceeding as above we subtract from that equality the ones obtained from position $(1,1)$ for the monomials $b_{i_1}\cdots b_{i_k}$, $b_j b_{i_2} \cdots b_{i_k}$, and $b_{i_1}b_jb_{i_3}\cdots b_{i_k}$, we will be left with $-2\beta(i_1, j, i_2, \ldots, i_k) =0$. 

Now we recall what was proved above, this will be important in what follows. 
\begin{itemize}
\item
We start with a monomial $b_{p_1}\cdots b_{p_k}$ that appears at position $(1,1)$.
\item
We show that the coefficients $\beta(p,p_1,\ldots, p_k)=0$ for $p<p_1$.
\item
We show that the coefficients $\beta(p_1,p, p_2,\ldots, p_k)=0$ for $p$ such that $p_1<p<p_2$. 
\end{itemize}

As a consequence, in Equation (\ref{11}) all the coefficients $\beta$ are equal to $0$ and so $\alpha(i_1, \ldots,i_k)=0$.

Suppose now that $i_1<i_2<\cdots<i_k$, $j\notin\{i_1, \ldots,i_k \}$ and $j>i_2$. Then in Equation (\ref{11}) we have coefficients $\beta$ where we plug in some $i$ which satisfies either $i<i_1$ or $i_1<i<i_2$. But this implies all these coefficients $\beta$ are equal to 0. Therefore we get also in this case that $\alpha(i_1, \ldots,i_k)=0$. Then one replaces the set $\{i_1,\ldots,i_k\}$ with another appropriate set of $k$ positive integers and continues until all coefficients in the linear combination turn out to equal 0. 

We observe that the following cases were formally left outside the scope of the above argument. 
\begin{itemize}
\item
$i_1=1$, then one cannot plug in any $i<i_1$ in the coefficient $\beta(i,i_1,\ldots,i_k)$ in Eq.~(\ref{11}). If this is the case but $i_2>2$ then the argument is the same as above.
\item
$i_1=1$ and $i_2=2$. In this case we get immediately $\alpha(1,2,i_3,\ldots, i_k)=0$, and we are done as the corresponding trace monomial does not participate in the linear combination.
\end{itemize}

In conclusion the elements in \eqref{non identities of C2 t 1,0} are linearly independent modulo $ \Id^{tr}(C_2^{t_{0, 1}})$ and so $T = \Id^{tr} \left( C_2^{t_{0,1}} \right)$.

We are left to compute the trace codimension sequence. Since the monomials in \eqref{non identities of C2 t 1,0} which do not contain traces of length $2$ are exactly $\sum_{k=0}^n \binom{n}{k}$ and the remaining ones are $\sum_{k=2}^n \binom{n}{k}$, we get that
\[
c_n^{tr} \left( C_2^{t_{0, 1}} \right) =  \sum_{k=0}^n \binom{n}{k} + 
\sum_{k=2}^n \binom{n}{k}  = 2^{n+1} - n -1.
\]
\end{proof}

Now we can consider the case of $C_2^{t_{\alpha,1}}$, where $\alpha \in F$ can be non-zero. 

\begin{Theorem} \label{identities of C2 t a,1}
Let $\alpha \in F$. The trace $T$-ideal $\Id^{tr} \left( C_2^{t_{\alpha, 1}} \right)$ is generated, as a trace $T$-ideal, by the polynomials:
\begin{itemize}
\item[•] 
$f_1 = [x_1,x_2]$,
\item[•] 
$h_4 = \Tr(x_1) \Tr(x_2)x_3 - \Tr(x_2) \Tr(x_3)x_1 + \Tr(x_1x_2) \Tr(x_3) - \Tr(x_2x_3) \Tr(x_1) - \alpha \Tr(x_1 x_2) x_3 + \alpha \Tr(x_2 x_3) x_1$,
\item[•] 
$h_5 = -\Tr(x_1x_2)x_3 - \Tr(x_1x_3)x_2 - \Tr(x_2x_3)x_1 + \Tr(x_1x_2x_3) +\Tr(x_1)x_2x_3 + \Tr(x_2)x_1x_3 + \Tr(x_3)x_1x_2 - \alpha x_1 x_2 x_3$.
\end{itemize}
Moreover 
\[
c_n^{tr} \left( C_2^{t_{\alpha, 1}} \right) = 2^{n+1} - n -1.
\]
\end{Theorem}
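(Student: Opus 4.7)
The plan is to mimic the proof of Theorem \ref{identities of C2 t 0,1}, exploiting the relations
\[
h_4 = h_2 + \alpha\bigl(-\Tr(x_1 x_2)x_3 + \Tr(x_2 x_3)x_1\bigr), \qquad h_5 = h_3 - \alpha\, x_1 x_2 x_3,
\]
so that the new generators differ from the old ones only by summands that already belong to the candidate spanning set \eqref{non identities of C2 t 1,0}. First I verify that $f_1, h_4, h_5 \in \Id^{tr}\bigl(C_2^{t_{\alpha,1}}\bigr)$ by direct evaluation on a generic element $x_i = \begin{pmatrix}a_i & b_i\\ 0 & a_i\end{pmatrix}$, using $\Tr(x_i) = \alpha a_i + b_i$ and $\Tr(x_i x_j) = \alpha a_i a_j + a_i b_j + a_j b_i$; this yields $T := \langle f_1, h_4, h_5\rangle_{T^{tr}} \subseteq \Id^{tr}\bigl(C_2^{t_{\alpha,1}}\bigr)$.

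Next I show that the monomials in \eqref{non identities of C2 t 1,0} still span $MT_n$ modulo $T$. The reduction chain used in Theorem \ref{identities of C2 t 0,1} --- order variables inside and outside traces by means of $f_1$, collapse every trace of length $\geq 3$ using $h_5$ (in place of $h_3$), then suppress every product of two long traces via the substitution trick on $h_4$ combined with $h_5$ (in place of $h_2$ with $h_3$) --- carries over verbatim, because every ``extra'' $\alpha$-summand that these new identities produce along the way is already of the form \eqref{non identities of C2 t 1,0} and therefore never re-enters the reduction.

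The linear-independence step is the main obstacle. Given a hypothetical identity $P = \sum c_M M \in \Id^{tr}\bigl(C_2^{t_{\alpha,1}}\bigr)$ supported on the elements of \eqref{non identities of C2 t 1,0}, I will evaluate on generic $x_i$ and regard the $(1,1)$- and $(1,2)$-entries of the resulting matrix as zero polynomials in $F[a_1, b_1, \ldots, a_n, b_n]$. The plan is then to apply the invertible linear substitution $b_i = b_i' - \alpha a_i$, under which $\Tr(x_i)$ becomes the clean expression $b_i'$ exactly as in the $\alpha=0$ case, while $\Tr(x_i x_j)$ becomes $a_i b_j' + a_j b_i' - \alpha a_i a_j$. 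Filtering by total degree in the new variables $b_i'$, the top-$b'$-degree component of each matrix entry reproduces precisely the expressions analysed in the proof of Theorem \ref{identities of C2 t 0,1}: for instance, the $b'$-degree-$n$ part of the $(1,1)$-entry isolates the single Case-A monomial $\Tr(x_1)\cdots\Tr(x_n)$, and matching this with the top-degree $(1,2)$-entry and the next-lower $(1,1)$-component captures, in order, the coefficients of the Case-B monomials of maximal length. The system of linear equations derived in the proof of Theorem \ref{identities of C2 t 0,1} (obtained by inspecting the coefficients of $b_{p_1}'\cdots b_{p_k}'$ at position $(1,1)$ and of $b_{p_1}'\cdots b_{p_k}' b_j'$ at position $(1,2)$) then forces the leading coefficients to vanish, and a descending induction on the $b'$-degree absorbs the lower-order contributions arising from the $-\alpha a_i a_j$ correction inside the Case-B monomials. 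Hence $c_M = 0$ for every $M$, whence $T = \Id^{tr}\bigl(C_2^{t_{\alpha,1}}\bigr)$.

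Finally, the counting of the monomials in \eqref{non identities of C2 t 1,0} carried out at the end of the proof of Theorem \ref{identities of C2 t 0,1} applies without modification, giving $c_n^{tr}\bigl(C_2^{t_{\alpha,1}}\bigr) = 2^{n+1} - n - 1$.
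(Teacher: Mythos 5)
Your proposal is correct and follows essentially the same route as the paper: the spanning argument is carried over verbatim from the $t_{0,1}$ case (the extra $\alpha$-terms in $h_4$, $h_5$ already lie in the spanning set), and linear independence is obtained by a leading-term argument in the $b$-variables that reduces to the $\alpha=0$ computation, followed by descending induction on the $b$-degree. Your substitution $b_i = b_i' - \alpha a_i$ is only a cosmetic repackaging of the paper's observation that each trace monomial contributes the same top-degree-in-$b$ term as in the $\alpha=0$ case plus terms with strictly fewer $b$'s.
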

\begin{proof}
The case $\alpha = 0$ was proved in the previous theorem. So assume $\alpha \neq 0$, we shall reduce this case to that of $\alpha=0$. The same arguments of the first part of the proof of Theorem \ref{identities of C2 t 0,1} show that the polynomials in \eqref{non identities of C2 t 1,0} span $MT_n$, modulo $MT_n \cap T$, where $T = \langle f_1, h_4, h_5 \rangle_{T^{tr}}$. 

In order to complete the proof it is sufficient to prove that these elements are linearly independent. We use also in this case generic matrices. Recall that $\Tr(x_i)= \alpha a_i+b_i$. Then each of the trace monomials from the proof of the previous theorem, that  yields the element $b_{i_1}\cdots b_{i_k}$ will produce some other elements with fewer $b$'s in it, their indices forming some proper subset of $\{i_1,\ldots,i_k\}$. Hence we can consider the monomials at position $(1,1)$ with the largest possible degree, say $k$, in the letters $b$. Then we fix a monomial that appears in our combination, say $b_{i_1}\cdots b_{i_k}$, and repeat the above argument.
\end{proof}

Our next goal is to study the algebras $C_2^{t_{\alpha,0}}$, i.e., the algebra $C_2$ endowed with the degenerate trace $t_{\alpha, 0}$. Actually, we shall introduce a generalization of such an algebra. Let us consider, for any $k\geq 1,$
\[
C_k = \left \{
\begin{pmatrix}
a_1 & a_2 & \cdots & a_k \\
0 & a_1 & \ddots & \vdots \\
\vdots  & \vdots  & \ddots & a_2  \\
0 & 0 & \cdots & a_1 
\end{pmatrix} \mid a_1, \ldots, a_k \in F \right \}.
\]
Notice that for $k>1$ the Jacobson radical of $C_k$  $J = J(C_k) = F (e_{12} + e_{23} + \cdots + e_{k-1k}) + \cdots + F (e_{1k})
$ consists of strictly upper triangular matrices, and hence is such that $J^k = 0$. One may interpret the algebra $C_k$ as the quotient $F[x]/(x^k)$, that is the truncated polynomial algebra, in an obvious manner. 

We define the trace $t_{\alpha, 0, \ldots, 0}$ on the algebra $C_k$ as follows: 
\[
t_{\alpha, 0, \ldots, 0}(a(e_{11} + \cdots + e_{kk}) + b (e_{12} + e_{23} + \cdots + e_{k-1k}) + \cdots + c (e_{1k})) = \alpha a + 0b + \cdots + 0c = \alpha a.
\]
To simplify the notation, we shall use the symbol $C_k^{t_{\alpha,0}}$ to indicate such a trace algebra.

\begin{Theorem} \label{identities of Ck t alfa 0}
Let $\alpha \in F \setminus \{0\}$, $k \geq 2 $. Then $\Id^{tr} \left( C_k^{t_{\alpha, 0}} \right)$ is generated, as a trace $T$-ideal, by the polynomials:
\begin{itemize}
\item[•] $f_1 = [x_1, x_2]$.
\item[•] $g_6 = \Tr(x_1) \Tr(x_2) - \alpha \Tr(x_1 x_2)$.
\item[•] $g_7 = \left( \Tr(x_1) - \alpha x_1 \right) \cdots \left( \Tr(x_k) - \alpha x_k \right)$.
\end{itemize}
Moreover, $ \displaystyle c_n^{tr} \left( C_k^{t_{\alpha, 0}} \right) = \sum_{i = 0}^{k-1} \binom{n}{i} \approx \dfrac{n^{k-1}}{(k-1)!} $.
\end{Theorem}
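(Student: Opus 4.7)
The plan is to proceed in three stages: verify that $f_1, g_6, g_7$ are identities of $C_k^{t_{\alpha,0}}$, exhibit a spanning set of $MT_n$ of size $\sum_{i=0}^{k-1}\binom{n}{i}$ modulo them, and prove linear independence of this spanning set on generic elements of $C_k$. Verification is immediate: $f_1$ holds by commutativity of $C_k$; writing $u = a\cdot 1 + r$ with $r \in J$ gives $\tr(u) = \alpha a$, hence $\tr(u)\tr(v) = \alpha^2 ab = \alpha \tr(uv)$, yielding $g_6$; and $\tr(u_i) - \alpha u_i = -\alpha r_i \in J$, so the product of the $k$ such factors in $g_7$ lies in $J^k = 0$.

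For the spanning set, $f_1$ lets every variable commute, and a short induction with $g_6$ yields the consequence $\Tr(M_1)\cdots\Tr(M_r) \equiv \alpha^{r-1}\Tr(M_1\cdots M_r)$, which collapses any product of traces to a single one. Hence modulo $\langle f_1, g_6\rangle_{T^{tr}}$ every element of $MT_n$ reduces to a linear combination of
\[
N_S := \Tr\Big(\prod_{i \in S} x_i\Big)\prod_{i \notin S} x_i, \qquad S \subseteq \{1,\ldots,n\},
\]
with the convention $N_\emptyset = x_1\cdots x_n$ (no trace factor). To handle $|S| \geq k$, I would substitute in $g_7$ the variables $x_1, \ldots, x_{k-1}$ by $k-1$ distinct variables of $S$ and $x_k$ by the product of the remaining $|S| - k + 1$ variables of $S$, then multiply by $\prod_{i \notin S} x_i$. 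Expanding the product and collapsing traces by $g_6$ isolates $N_S$ with coefficient $\alpha^{k-1}$; every other term has at least one factor of the form $-\alpha x_{i_j}$ (moving a variable outside the unique trace) or the whole $-\alpha x_{i_k}\cdots x_{i_s}$ in the last factor (reducing the trace size to $k-1$), so each surviving monomial is a scalar multiple of some $N_{S'}$ with $|S'| < |S|$. Iterating gives the spanning set $\{N_S : |S| < k\}$, of cardinality $\sum_{i=0}^{k-1}\binom{n}{i}$.

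For linear independence, evaluate on generic elements $X_i = a_i + \sum_{j=1}^{k-1}\lambda_{i,j}\xi^j \in C_k$, where $\xi$ generates $C_k$ (with $\xi^k = 0$) and $a_i,\lambda_{i,j}$ are independent commuting indeterminates over $F$. A direct expansion shows that in $N_S(X_1,\ldots,X_n)$ the coefficient of the monomial $\prod_{i \notin T} a_i\cdot\prod_{i \in T}\lambda_{i,\epsilon(i)}\cdot\xi^{\sum\epsilon(i)}$, for $T \subseteq \{1,\ldots,n\}$ and $\epsilon\colon T \to \{1,\ldots,k-1\}$ with $\sum\epsilon(i) < k$, equals $\alpha$ if $|S| \geq 1$ and $S\cap T = \emptyset$, equals $1$ if $S = \emptyset$, and vanishes otherwise. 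A putative relation $c_\emptyset N_\emptyset + \sum_{1 \leq |S| < k} c_S N_S \equiv 0$ therefore forces
\[
c_\emptyset + \alpha\sum_{\substack{1 \leq |S| < k \\ S \cap W = \emptyset}} c_S = 0, \qquad |W| < k,
\]
a square system of $\sum_{i=0}^{k-1}\binom{n}{i}$ equations in the same number of unknowns. A Möbius-style inversion on the truncated Boolean lattice of subsets of size $< k$, propagated via the telescoping differences $E(W) - E(W\cup\{j\}) = \alpha\sum_{j \in S,\,S\cap W = \emptyset,\,|S|<k} c_S$ (which must vanish since both $E(W)$ and $E(W\cup\{j\})$ equal $-c_\emptyset/\alpha$), yields $c_S = 0$ for every $1 \leq |S| < k$ and then $c_\emptyset = 0$.

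The main obstacle I anticipate is the last inversion step: small cases ($n = 3, k = 2$ or $n = 3, k = 3$) are easy to check by direct row reduction, but to make the argument uniform across all $(n,k)$ one must order the equations carefully by $|W|$ and run a descending induction that respects the truncation $|S| < k$ of the indexing lattice. Once linear independence is established, the codimension formula $c_n^{tr}(C_k^{t_{\alpha,0}}) = \sum_{i=0}^{k-1}\binom{n}{i}$ follows at once, the asymptotic $\sim n^{k-1}/(k-1)!$ being immediate, and the equality $\langle f_1, g_6, g_7\rangle_{T^{tr}} = \Id^{tr}(C_k^{t_{\alpha,0}})$ is then forced by matching dimensions.
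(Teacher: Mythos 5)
Your proposal is correct and follows the same overall architecture as the paper's proof: verify the three identities, use $f_1$ and $g_6$ to collapse all trace products to a single trace and $g_7$ (expanded) to bound the trace length by $k-1$, then prove linear independence of the resulting $\sum_{i=0}^{k-1}\binom{n}{i}$ monomials by evaluation. The only place where you diverge is the linear-independence step: you evaluate on fully generic elements $a_i+\sum_j\lambda_{i,j}\xi^j$ and extract a single clean system $c_\emptyset+\alpha\sum_{S\cap W=\emptyset}c_S=0$ indexed by subsets $W$ with $|W|<k$, which you then solve by M\"obius inversion on the truncated Boolean lattice; the paper instead substitutes the specific elements $I$ and $b=e_{12}+\cdots+e_{k-1,k}$ in all $2^{k-1}$ patterns on a chosen set of $k-1$ variables and eliminates one coefficient at a time by an iterated alternating sum. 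These are the same inclusion--exclusion idea in two guises, but your version is more uniform: the alternating sum $\sum_{W\subseteq S_0}(-1)^{|W|}E(W)$ kills everything except $\alpha\sum_{S\supseteq S_0}c_S$, and a descending induction on $|S_0|$ (starting at $|S_0|=k-1$, where only $S=S_0$ survives the truncation $|S|<k$) finishes in one pass. The inversion step you flag as the main obstacle does go through exactly as you describe, so there is no gap.
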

\begin{proof}
It is clear that $f_1$ and $g_6$ are trace identities of $C_k^{t_{\alpha, 0}}$. Now for each $a+j \in C_k^{t_{\alpha,0}}$, $a \in F(e_{11} + \cdots + e_{kk})$ and $j \in J$, we have that
\[
t_{\alpha, 0, \ldots, 0}(a+j) - \alpha (a+j) = - \alpha j \in J.
\]
Since $J^k = 0$, we get that $g_7 \equiv 0$ on $C_{k}^{t_{\alpha,0}}$. 

So far we have proved the inclusion $T = \langle f_1, g_6, g_7 \rangle_{T^{tr}} \subseteq \Id^{tr}(C_k^{t_{\alpha, 0}})$. 

In order to obtain the opposite inclusion, first we shall prove that the polynomials
\begin{equation} \label{non identities of Ck t alpha 0}
\Tr(x_{i_1} \cdots x_{i_a}) x_{j_1} \cdots x_{j_{n-a}}, \ \ \ \ \ a = 0, \ldots, k-1, \ \ \ i_1 < \cdots < i_a, \ \ j_1 < \cdots < j_{n-a},
\end{equation}
span $MT_n$, modulo $MT_n \cap T$, for every $n \geq 1$.

By the identity $f_1$ we can order all the variables, inside and outside a trace. Moreover, because of $g_6$, we can kill all products of two traces (and more than two traces). So we may consider only monomials with either no traces (there is only one such monomial namely $x_1 \cdots x_n$), or with just one trace. Now by taking into account the identity $g_7$ (modified in accordance with the identity $g_6$) it is not difficult to see that one can have, inside the trace, at most $k-1$ variables.

Our next goal is to show that the polynomials in \eqref{non identities of Ck t alpha 0} are linearly independent modulo $ \Id^{tr}(C_k^{t_{\alpha, 0}})$.  Here we assume that $n >k$, otherwise the proof is trivial. To this end, let $g = g(x_1, \ldots, x_n, \Tr)$ be a linear combination of the above polynomials which is a trace identity:
\[
g(x_1, \ldots, x_n, \Tr) = a_0 x_1 \cdots x_n +  \sum_{i =1}^{k-1} \sum_{l_1, \ldots, l_i} a_{l_1, \ldots, l_i} \Tr(x_{l_1} \cdots x_{l_i}) x_{m_1} \cdots x_{m_{n-i}}.
\]
Here $l_1 < \cdots < l_{i}$ and $m_1 < \cdots < m_{n-i}$. Notice that we use the notation $a_{l_1, \ldots, l_i}$ to denote the coefficient of the monomial in which the variables $x_{l_1}$, \dots, $x_{l_i}$ appear inside the trace, $i = 1$, \dots, $k-1$. 

Our first goal is to show that $a_{1, \ldots, k-1} = 0$. To this end we evaluate all the variables $x_k$, \dots, $x_{n}$ into the identity matrix $I=e_{11}+ \cdots + e_{kk}$. Now we make $k$ distinct types of substitutions for the variables $x_1$, \dots, $x_{k-1}$. Denote by $b = e_{12}+ \cdots + e_{k-1,k}$, we will evaluate some of the first $k-1$ variables to $b$, and the remaining to $I$.
\begin{itemize}
\item[0)] $0$ variables go to $b$: we have $\binom{k-1}{0} = 1$ evaluations of this kind.
\item[1)] $1$ variable goes to $b$: we have $\binom{k-1}{1} = k-1$ evaluations of this kind.
\item[2)] $2$ variables go to $b$: we have $\binom{k-1}{2}$ evaluations of this kind.
\\ 
$
\vdots
$
\item[k-1)] $k-1$ variables (all of them) go to $b$: we have $\binom{k-1}{k-1} = 1$ evaluations of this kind.
\end{itemize}
In this way we obtain exactly $\sum_{i=0}^{k-1} \binom{k-1}{i} =2^{k-1}$ equations for the coefficients $a_{l_1, \ldots, l_i}$. These give a linear homogeneous system of equations, and we want to prove it admits only the trivial solution. We transform the system as follows.
\begin{itemize}
\item[•] First step. We subtract the equality obtained in $0)$ from all the remaining equalities, and then divide by $\alpha$. Hence we are left with $2^{k-1}-1$ equalities (we can forget about the one from $0)$) and, in all of them, there will appear elements $a_{l_1, \ldots, l_i}$ such that at least one $l_j$'s is among $\{1, \dots, k-1 \}$, $i =1$, \dots, $k-1$.
\item[•] Second step. We consider the last one of the obtained equations (notice that we have only one evaluation from the k-1) substitution) and we use the following trick: we subtract from it all the equations from k-2), then add all the equations from k-3) and so on alternating addition and subtraction.  
\end{itemize}

We claim that the equation obtained is just 
\[
a_{1, \ldots, k-1} = 0.
\]
In fact, $a_{1, \ldots, k-1}$ appears, with coefficient 1, in all the $2^{k-1}-1$ equations obtained after the first step. Hence, after performing the second step, the coefficient of $a_{1, \ldots, k-1}$ is exactly
\[
\sum_{i=1}^{k-1} (-1)^{k-i-1} {k-1 \choose i} = \begin{cases}1 & \mbox{ if } k \mbox{ is odd}  \\
-1 & \mbox{ if } k \mbox{ is even} .  \end{cases}
\]
In order to prove the claim we have to show that the coefficients of all the remaining $a_{l_1, \ldots, l_i}$ are zero. First let us focus our attention to the coefficient of the $a_i$'s, $i= 1$, \dots, $k-1$. In this case the coefficient of $a_i$ is exactly
\[
\sum_{i=0}^{k-2} (-1)^{k-i-2} {k-2 \choose i} = 0.
\]
Now let us consider the coefficients of the $a_{ij}$'s. If just one of $i$ and $j$ is in $\{1,\ldots, k-1\}$, then the coefficient of $a_{ij}$ is exactly  
\[
\sum_{i=0}^{k-2} (-1)^{k-i-2} {k-2 \choose i} = 0.
\]
On the other hand, if both $i$, $j \in \{1,\ldots, k-1\}$, we have that the coefficient of $a_{ij}$ is equal to  
\[
\sum_{i=0}^{k-2} (-1)^{k-i-2} {k-2 \choose i} + \sum_{i=0}^{k-3} (-1)^{k-i-3} {k-3 \choose i} = 0.
\]
Now let us consider the coefficients of the $a_{ijh}$'s. In case $i$, $j$, $h \in \{1,\ldots, k-1\}$, we have that the coefficient of $a_{ijh}$ is equal to 
\[
\sum_{i=0}^{k-2} (-1)^{k-i-2} {k-2 \choose i} + \sum_{i=0}^{k-3} (-1)^{k-i-3} {k-3 \choose i} + \sum_{i=0}^{k-4} (-1)^{k-i-4} {k-4 \choose i} = 0.
\]
As before, we consider separately the cases when just $i$ or  just $i$ and $j$ belong to $\{1,\ldots, k-1\}$. But then we get just the first one or the first two sums above. In any case we get zero as a result and we are done. 

In a similar way one deals with all remaining coefficients and so the claim is proved. 

Using the same argument but changing the set of the $k-1$ variables accordingly we get that all the $a_{l_1, \ldots, l_{k-1}}$'s are zero.

Hence we kill all traces of length $k-1$. Then it is sufficient to consider, in the same way as above the elements $a_{j_1, \ldots, j_{k-2}}$ and so on. At the end of this process we prove that the polynomials in \eqref{non identities of Ck t alpha 0} are linearly independent. In conclusion
\[
\Id^{tr} \left( C_k^{t_{\alpha, 0}} \right) = T.
\]
The second part of the theorem follows immediately since the elements in \eqref{non identities of Ck t alpha 0} are exactly $
\displaystyle \sum_{i = 0}^{k-1} \binom{n}{i}$.
\end{proof}

\section{Varieties of polynomial growth}

In this section we study  varieties of algebras with trace whose codimension sequence is of polynomial growth. We start with the following definition.

\begin{Definition}
A variety $\mathcal{V}$ of algebras with trace is minimal of polynomial growth if $c^{tr}_n(\mathcal{V})\approx qn^k$, for some $k\geq 1$, $q>0$, and for every proper subvariety $\mathcal{U}\subsetneqq \mathcal{V}$ generated by a unitary finite dimensional algebra, we have that $c^{tr}_n(\mathcal{U})\approx q^\prime n^t$, with $t<k$.
\end{Definition}

The following result (see \cite[Theorem 30]{IoppoloKoshlukovLaMattina2021}) gives a characterization of the varieties of polynomial codimension growth. 

\begin{Theorem} \label{trace algebras with PG}
Let $A = A_1 \oplus \cdots \oplus A_m + J$ be a unitary finite dimensional  algebra with trace $\tr$ over a  field $F$ of characteristic zero. Then $c_n^{tr}(A)$ is polynomially bounded if and only if 
$\tr(J) = 0$, $A_i \cong F$, $i = 1$, \dots, $m$, and either $m=1$ or $\tr(A_i) = 0$ and $A_i J A_k = 0$, $1 \leq i,k \leq m$, $i \neq k$.
\end{Theorem}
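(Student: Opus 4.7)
The plan is to treat necessity and sufficiency separately, exploiting the Wedderburn--Malcev decomposition supplied in the hypothesis together with the structural results for $C_k^{t_{\alpha,0}}$ from Theorem \ref{identities of Ck t alfa 0} and for the $D_2$-algebras from Section 5. For necessity I would argue by contrapositive: if any of the listed conditions fails, construct a sequence of multilinear trace polynomials whose evaluations realize at least $\sim 2^n$ linearly independent values in $A$, thereby forcing exponential growth of $c_n^{tr}(A)$. Concretely, if some $\dim_F A_i \geq 2$, then after scalar extension (which preserves the polynomial/exponential dichotomy for both $c_n$ and $c_n^{tr}$) $A_i$ contains $F \oplus F$ with a non-proportional trace or a copy of $M_2(F)$, and already $c_n(A_i)$ grows exponentially. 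If $\tr(J) \neq 0$, pick $j \in J$ with $\tr(j) \neq 0$: the unital trace subalgebra generated by $j$ embeds essentially a copy of $C_2^{t_{\alpha,1}}$, whose codimensions are $2^{n+1}-n-1$ by Theorem \ref{identities of C2 t a,1}. If $m \geq 2$ and $\tr(A_{i_0}) \neq 0$, restrict to the subalgebra $F e_{i_0} \oplus F e_{j_0}$ for a second block; it is isomorphic as an algebra with trace to some $D_2^{t_{\alpha,\beta}}$ and hence has $c_n^{tr} \geq 2^n$ by the results of Section 5. A parallel construction, using long alternating products bridged by a fixed nonzero element of $A_i J A_k$, handles the remaining condition.

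For sufficiency, assume all conditions hold and bound $c_n^{tr}(A)$ polynomially. If $m = 1$, then $A = F + J$ with $J$ nilpotent of some index $k$ and $\tr|_J = 0$, so setting $\alpha = \tr(1)$ the identities $f_1$, $g_6$, $g_7$ of Theorem \ref{identities of Ck t alfa 0} hold on $A$; the spanning argument in that proof then yields $c_n^{tr}(A) \leq \sum_{a=0}^{k-1}\binom{n}{a} = O(n^{k-1})$. If $m \geq 2$, the hypotheses $\tr(A_i) = 0$ for every $i$ and $\tr(J) = 0$ together imply $\tr \equiv 0$ on $A$ by linearity, so $\Tr(x) \in \Id^{tr}(A)$ and trace identities collapse to ordinary ones. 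The condition $A_i J A_k = 0$ for $i \neq k$ then forces every nonzero multilinear evaluation to have its outside-trace variables localized inside a single block $A_i + A_i J + J A_i + A_i J A_i + \cdots$, and the block-wise estimate from the $m=1$ case, summed over the finitely many blocks, again gives a polynomial bound.

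The main obstacle is the necessity direction in the case $m \geq 2$ with $A_i J A_k \neq 0$ for some $i \neq k$. Here one must exhibit explicit multilinear trace polynomials whose evaluations at generic elements of $A_i$, $A_k$ and at a fixed nonzero element $w \in A_i J A_k$ remain linearly independent modulo $\Id^{tr}(A)$. Following the generic-matrix pattern of the proof of Theorem \ref{identities of C2 t 0,1}, I would consider monomials of the form $e_i x_{\sigma(1)} w x_{\sigma(2)} \cdots$ with prescribed assignments of each variable to a particular block, and compare coefficients of distinguished products at two matrix positions; the delicate point is to arrange the alternation between blocks so that the Wedderburn relations $A_i A_k = 0$ do not accidentally annihilate the target monomials, and so that the $\sim 2^n$ different block-assignments really give linearly independent evaluations rather than coinciding through the remaining identities.
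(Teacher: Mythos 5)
First, a point of reference: the paper does not prove this statement at all --- it is imported verbatim as \cite[Theorem 30]{IoppoloKoshlukovLaMattina2021} --- so there is no in-paper argument to measure you against. Your overall architecture (necessity by exhibiting subalgebras of exponential growth, sufficiency by a spanning argument) is the natural one, and the necessity direction is most efficiently routed through the excluded algebras $D_2^{t_{\alpha,\beta}}$, $C_2^{t_{\epsilon,1}}$, $UT_2$ of Theorem \ref{characterization}. But several of your individual steps have real gaps. (1) The claim that $\dim_F A_i\ge 2$ forces $c_n(A_i)$ to grow exponentially is false when $A_i$ is a field extension of $F$: such an $A_i$ is commutative, so $c_n(A_i)=1$; the exponential growth must come from the trace via a $D_2$-type algebra after scalar extension, which is a different mechanism from the one you invoke. (2) In the case $\tr(J)\neq 0$, ``embeds essentially a copy of $C_2^{t_{\alpha,1}}$'' hides the real work: an element $j\in J$ with $\tr(j)\neq 0$ generates together with $1$ a copy of $C_k$, $k\ge 2$, with a trace nonvanishing on the radical, and one cannot in general find inside it an element $u$ with $u^2=0$ and $\tr(u)\neq 0$ (take $J=Fj+Fj^2$, $j^3=0$, $\tr(j)=1$, $\tr(j^2)=0$); so you must either prove that $C_2^{t_{\epsilon,1}}$ lies in the trace variety of that subalgebra or count independent trace monomials directly. (3) You call the case $A_iJA_k\neq 0$, $i\neq k$, ``the main obstacle,'' but it is the easiest step: with $A_i\cong A_k\cong F$ and $0\neq w\in e_iJe_k$ one checks $e_iw=we_k=w$, $we_i=e_kw=w^2=0$, so $Fe_i+Fe_k+Fw\cong UT_2(F)$ as an ordinary algebra, and $c_n^{tr}(A)\ge c_n(A)\ge c_n(UT_2)$, which is already exponential; no generic-matrix computation is needed.

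On the sufficiency side there is a further gap in the case $m=1$: you invoke the spanning argument of Theorem \ref{identities of Ck t alfa 0}, but that argument begins by using $f_1=[x_1,x_2]$ to order all variables, and $A=F+J$ need not be commutative. What is true is that $\tr(J)=0$ forces every trace value to depend only on the semisimple component, so $g_6$ and $g_7$ still hold and every multilinear trace monomial reduces, modulo $\Id^{tr}(A)$, to $\Tr(x_{i_1})\cdots\Tr(x_{i_a})\,w$ with $a\le k-1$ and $w$ an ordinary multilinear monomial; this gives $c_n^{tr}(A)\le\sum_{a=0}^{k-1}\binom{n}{a}\,c_{n-a}(A)$, and one must then observe separately that $c_m(F+J)$ is polynomially bounded for nilpotent $J$ (evaluations of multilinear monomials are determined by the ordered products of at most $k-1$ radical components). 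The case $m\ge 2$ is essentially fine once you note that $\tr\equiv 0$ makes $\Tr(x_1)$ a trace identity, so $c_n^{tr}(A)=c_n(A)$, and that $1=\sum e_i$ together with $e_iJe_k=0$ for $i\neq k$ splits $A$ as a direct sum of algebras $Fe_i+e_iJe_i$.
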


Given two algebras with trace $A$ and $B$, we say that $A$ is $T^{tr}$-equivalent to $B$, and we write $A \sim_{T^{tr}} B$, in case $\Id^{tr}(A) = \Id^{tr}(B) $.

\begin{Theorem} \label{C_k minimal varieties}
For every $k \geq 2$, the trace algebra $C_k^{t_{\alpha,0}}$ generates a minimal variety of polynomial growth.
\end{Theorem}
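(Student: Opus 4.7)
By Theorem \ref{identities of Ck t alfa 0}, $c_n^{tr}(C_k^{t_{\alpha,0}}) \approx \frac{n^{k-1}}{(k-1)!}$, so $\V(C_k^{t_{\alpha,0}})$ has polynomial growth of degree $k-1$. We must show that every proper subvariety generated by a unitary finite-dimensional algebra grows as $q' n^t$ with $t < k-1$.

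Let $B$ be a unitary finite-dimensional trace algebra with $\V(B) \subsetneq \V(C_k^{t_{\alpha,0}})$. Then $B$ satisfies the generators produced by Theorem \ref{identities of Ck t alfa 0}, namely $[x_1,x_2]$, $g_6 = \Tr(x_1)\Tr(x_2) - \alpha\Tr(x_1 x_2)$, and $g_7 = \prod_{i=1}^k(\Tr(x_i) - \alpha x_i)$. Substituting $x_1 = 1$ in $g_6$ forces $(\tr(1_B) - \alpha)\Tr(x_2) \equiv 0$ on $B$; so either $\tr \equiv 0$ on $B$ (in which case $c_n^{tr}(B) = c_n(B) = 1$ by commutativity and we are done), or $\tr(1_B) = \alpha$.

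Assume the latter. Define $\pi\colon B \to F$ by $\pi(b) = \tr(b)/\alpha$; the identity $g_6$ makes $\pi$ multiplicative and $\pi(1)=1$, so $J_B := \ker(\pi) = \ker(\tr)$ is an ideal and $B = F\cdot 1 \oplus J_B$. For $b = a + j$ with $a \in F$ and $j \in J_B$ one has $\Tr(b) - \alpha b = -\alpha j$, so $g_7$ evaluated on $B$ equals $(-\alpha)^k j_1 \cdots j_k$, using that $B$ is commutative. Thus $g_7 \equiv 0$ on $B$ forces $J_B^k = 0$.

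The key structural claim, which I expect to be the main obstacle, is that if $J_B^{k-1} \neq 0$ then $\V(B) = \V(C_k^{t_{\alpha,0}})$, contradicting properness. Starting from some $j_1\cdots j_{k-1}\neq 0$ with $j_i \in J_B$, a Vandermonde-type specialization argument (valid in characteristic zero) produces a single $j \in J_B$ with $j^{k-1} \neq 0$. Applying $g_6$ iteratively to $j \in \ker(\tr)$ yields $\tr(j^i) = 0$ for all $i \geq 1$. Then $1, j, j^2, \ldots, j^{k-1}$ are linearly independent (using $j^k = 0$ and $\tr(1) = \alpha \neq 0$) and span a subalgebra of $B$ which, as a trace algebra, is isomorphic to $C_k^{t_{\alpha,0}}$, giving the contradiction. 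Hence $J_B^{k-1} = 0$, so $B$ satisfies the additional identity $g_7' := \prod_{i=1}^{k-1}(\Tr(x_i) - \alpha x_i) \equiv 0$. Repeating the spanning argument from the proof of Theorem \ref{identities of Ck t alfa 0} with $g_7'$ in place of $g_7$ bounds $c_n^{tr}(B)$ by $\sum_{i=0}^{k-2}\binom{n}{i} = O(n^{k-2})$, giving polynomial growth of integer degree at most $k-2 < k-1$, as required.
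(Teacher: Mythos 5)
Your proof is correct, but it takes a genuinely different route from the paper's in two places. First, where you derive the decomposition $B=F\cdot 1\oplus J_B$ directly from the identities themselves (observing that $g_6$ makes $\tr/\alpha$ a multiplicative functional and that $g_7$ then forces $J_B^k=0$), the paper instead invokes its structure theorem for polynomial codimension growth (Theorem \ref{trace algebras with PG}) to conclude $A\sim_{T^{tr}}F+J$ with $\tr(J)=0$, and only then reads off $\tr(1_A)=\alpha$ from $g_6$. Second, and more substantially, in the critical case where the radical has nilpotency index exactly $k$ the paper proves $A\sim_{T^{tr}}C_k^{t_{\alpha,0}}$ by re-running the linear independence computation from Theorem \ref{identities of Ck t alfa 0}, replacing the generic evaluations $b^i$ by products $j_1\cdots j_i$ of radical elements with $j_1\cdots j_{k-1}\neq 0$; you instead use linearization in characteristic zero to produce a single $j\in J_B$ with $j^{k-1}\neq 0$, check $\tr(j^i)=0$ via $g_6$, and exhibit $F[j]\cong_{tr}C_k^{t_{\alpha,0}}$ as a unitary trace subalgebra of $B$, which yields $\Id^{tr}(B)\subseteq\Id^{tr}(C_k^{t_{\alpha,0}})$ and hence non-properness without any codimension count. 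Your version is more self-contained (it uses only the generators $f_1,g_6,g_7$ and elementary linearization) and makes transparent exactly which identity forces which structural feature; the paper's version is shorter given the machinery already in place and reuses the independence argument it has already set up. The remaining steps --- the dichotomy on $\tr(1_B)$, the passage from $J_B^{k-1}=0$ to the identity $\prod_{i=1}^{k-1}(\Tr(x_i)-\alpha x_i)$ and the bound $c_n^{tr}(B)\leq\sum_{i=0}^{k-2}\binom{n}{i}$ --- match the paper's treatment of the case $q<k$, and both proofs handle the asymptotic statement at the same level of precision.
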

\begin{proof}
We have seen in Theorem \ref{identities of Ck t alfa 0} that $c_n^{tr}(C_k^{t_{\alpha,0}}) \approx r n^{k-1}$ where $r=1/(k-1)!$. 
Now let $ A \in \V^{tr} (C_k^{t_{\alpha,0}}) $ be a unitary finite dimensional algebra with trace $t$.

If $\Tr(x)\equiv 0$ on $A$ then,  since $A$ is commutative and with zero trace, we get that $c_n^{tr}(A) = 1$ and we are done.

Now assume that $\Tr(x) \not \equiv 0$ on $A$. Since $A \in \V^{tr} (C_k^{t_{\alpha,0}})$ then its sequence of codimensions is polynomially bounded. 
By Theorem \ref{trace algebras with PG} we get that $A \sim_{T^{tr}} F + J$ and, since $g_6 \equiv 0$ is a trace identity for $A$,   $t(1_A) = \alpha$. 
If $J=0$ then $c_n^{tr}(A) =1$ and we have finished this case.
So we assume that $J^{q-1} \neq 0$ but $J^q = 0$, for some $q\geq 2$, and consider $q-1$ elements $j_1$, \dots,  $j_{q-1} \in J$ such that $j_1 \cdots j_{q-1} \neq 0$. 

We shall prove that $q$ must be less than or equal to $k$.

Suppose by contradiction that $q > k$. Since  $j_1 \cdots j_{q-1} \neq 0$ we obtain that $j_1 \cdots j_{k} \neq 0$. By evaluating the trace identity $g_7$ on $j_1$, \dots, $j_{k}$,  we get
\[
g_7 (j_1, \ldots, j_k) = \left( \Tr(j_1) - \alpha j_1 \right) \cdots  \left( \Tr(j_k) - \alpha j_k \right) = (-1)^k \alpha^k j_1 \cdots j_k \neq 0,
\]
a contradiction.

Now if $q<k$, as in the proof of Theorem \ref{identities of Ck t alfa 0}, it is easily seen that 
\[
g = \left( \Tr(x_1) - \alpha x_1 \right) \cdots \left( \Tr(x_q) - \alpha x_q \right) \equiv 0 
\] 
is a trace identity for $A$ and $c_n^{tr}(A) \leq cn^{q-1} $, $q-1 < k-1$, and we are done.

Therefore we have to consider the case $q = k$. 
We shall  prove that $A \sim_{T^{tr}} C_k^{t_{\alpha, 0}}$. Since $\Id^{tr}( C_k^{t_{\alpha,0}}) \subseteq \Id^{tr}(A) $ we already know  that the polynomials in \eqref{non identities of Ck t alpha 0} span $MT_n$, modulo $MT_n \cap \Id^{tr}(A)$. In order to complete the proof we need to show that they are linearly independent. We use the same technique employed in the proof of Theorem \ref{identities of Ck t alfa 0} with the following modification. We substitute $b^i$ (recall that $b=e_{12}+e_{23}+\cdots+e_{k-1,k}$) with $j_1 \cdots j_i$, $i\geq 1$. Thus we obtain that the polynomials in \eqref{non identities of Ck t alpha 0} are linearly independent and this completes the proof. 
\end{proof}
 
\begin{Lemma} \label{commutative pg alg are Ck t a 0}
Let $A = F + J$ be a commutative unitary algebra with trace $t$ such that $t(1_A) = \alpha$ and $\tr(J) = 0$. Then for some $k\geq 1$ we have
\[
A \sim_{T^{tr}} C_k^{t_{\alpha,0}}.
\]
\end{Lemma}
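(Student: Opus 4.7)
The plan is to take $k$ to be the nilpotency index of $J$, namely the smallest positive integer with $J^k=0$ (this exists in the finite-dimensional context implicit in the previous results; if $J=0$ take $k=1$, giving $A\cong F\cong C_1^{t_{\alpha,0}}$). I would then establish $\Id^{tr}(A)=\Id^{tr}(C_k^{t_{\alpha,0}})$ by proving both inclusions, exploiting Theorem \ref{identities of Ck t alfa 0}, which supplies \emph{both} a set of generators $f_1,g_6,g_7$ of $\Id^{tr}(C_k^{t_{\alpha,0}})$ and a spanning set \eqref{non identities of Ck t alpha 0} of $MT_n$ modulo that ideal.

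For the inclusion $\Id^{tr}(C_k^{t_{\alpha,0}})\subseteq \Id^{tr}(A)$ it is enough to verify that $f_1$, $g_6$, and $g_7$ all vanish on $A$. The commutator $f_1$ vanishes by commutativity. Writing an arbitrary element as $a=\lambda+j$ with $\lambda\in F$ and $j\in J$, the assumptions $\Tr(1_A)=\alpha$ and $\Tr(J)=0$ give $\Tr(a)=\alpha\lambda$, so $\Tr(a_1)\Tr(a_2)=\alpha^2\lambda_1\lambda_2=\alpha\Tr(a_1a_2)$, which is $g_6$. For $g_7$, the key observation is that $\Tr(a_i)-\alpha a_i=-\alpha j_i\in J$, so a product of $k$ such factors lies in $J^k=0$.

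For the reverse inclusion I would reduce to a linear-independence argument. By the inclusion just proved, the monomials listed in \eqref{non identities of Ck t alpha 0} still span $MT_n$ modulo $\Id^{tr}(A)$, so the only thing left is to show that no nontrivial linear combination of them can be a trace identity of $A$. Here I would choose elements $j_1,\ldots,j_{k-1}\in J$ with $j_1\cdots j_{k-1}\neq 0$ (which exist by minimality of $k$) and rerun, essentially verbatim, the evaluation argument from the second half of the proof of Theorem \ref{identities of Ck t alfa 0}, but substituting $b^i\mapsto j_1\cdots j_i$ and the identity matrix $I\mapsto 1_A$. This is precisely the substitution trick that already appeared in the last paragraph of the proof of Theorem \ref{C_k minimal varieties}.

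The main obstacle, and the only genuinely technical step, is to verify that this substitution reproduces the same linear system on the unknown coefficients as in the proof of Theorem \ref{identities of Ck t alfa 0}. What makes it go through is the combined effect of commutativity of $A$, the nilpotency relations $J^{k-1}\neq 0$ and $J^k=0$, the normalization $\Tr(1_A)=\alpha$, and the triviality of the trace on $J$: each of these evaluations yields exactly the same scalar at position $(1,1)$ or in the trace as in the model $C_k^{t_{\alpha,0}}$, so the same elimination procedure forces every coefficient of the supposed combination to vanish. This yields $\Id^{tr}(A)=\Id^{tr}(C_k^{t_{\alpha,0}})$, completing the proof.
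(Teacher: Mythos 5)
Your proposal is correct and follows essentially the same route as the paper: take $k$ to be the nilpotency index of $J$, verify that the generators $f_1$, $g_6$, $g_7$ of $\Id^{tr}(C_k^{t_{\alpha,0}})$ vanish on $A$ to get one inclusion, and then obtain the reverse inclusion by rerunning the linear-independence evaluation with $b^i$ replaced by $j_1\cdots j_i$, exactly as in the proof of Theorem \ref{C_k minimal varieties}, to which the paper's own proof simply refers.
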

\begin{proof}
Let $k$ be the least integer such that $J^k=0$. 

If $k=1$ then $A=F=C_1^{t_{\alpha,0}}$ and we are done. 

Now assume $k>1$.
Since $A$ is commutative, then clearly $[x_1,x_2] \equiv 0$ on $A$. 
Moreover, since $J^k = 0$, the trace polynomial  
\[
\left( \Tr(x_1) - \alpha x_1 \right) \cdots \left( \Tr(x_k) - \alpha x_k \right) 
\]
is a trace identity for $A$.
Finally, it is immediate to see that $ \Tr(x_1) \Tr(x_2) - \alpha \Tr(x_1 x_2)$ vanishes on $A$. Hence
\[
\Id^{tr} \left( C_k^{t_{\alpha,0}} \right) \subseteq \Id^{tr}(A). 
\]
The opposite inclusion follows from the proof of the previous theorem.
\end{proof}

In \cite{IoppoloKoshlukovLaMattina2021} we characterized the varieties generated by finite dimensional algebras whose trace codimensions are of polynomial growth .

\begin{Theorem}[\cite{IoppoloKoshlukovLaMattina2021}] \label{characterization}
Let $A$ be a unitary finite dimensional  algebra with trace $\tr$ over a field $F$ of characteristic zero. Then the sequence $c_n^{tr}(A)$,  $n=1$, 2, \dots, is polynomially bounded if and only if $ D_2^{t_{\alpha, \beta}}$,  $D_2^{t_{\gamma,\gamma}}$, $D_2^{t_{\delta,0}}$, $C_2^{t_{\epsilon,1}}$, $UT_2(F) \notin \V^{tr}(A)$, for every choice of $\alpha$, $\beta$, $\gamma$, $\delta \in F \setminus \{ 0 \}$, $\alpha \ne \beta$, $\epsilon \in F$.
\end{Theorem}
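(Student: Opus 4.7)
The plan is to derive this classification from the structural Theorem~\ref{trace algebras with PG} by converting each failure of that structural condition into the presence of one of the five listed algebras in $\V^{tr}(A)$.

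The easy direction is that presence of any listed algebra in the trace variety forces exponential codimension growth. Theorems~\ref{identities and codimensions of D2 t alpha 0}, \ref{identities of D2 talpha alpha}, \ref{identities and codimensions of D2 talpha beta}, and \ref{identities of C2 t a,1} compute the trace codimensions explicitly and show them to be of order $2^n$; for $UT_2(F)$ one invokes the classical bound $c_n(UT_2(F)) \sim 2^n$ together with $c_n^{tr} \geq c_n$. Monotonicity of $c_n^{tr}$ under trace variety inclusion then gives $c_n^{tr}(A) \geq c_n^{tr}(B)$ for whichever $B$ lies in $\V^{tr}(A)$, precluding polynomial growth.

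For the converse, assume $c_n^{tr}(A)$ is not polynomially bounded and use Wedderburn--Malcev to write $A = A_1 \oplus \cdots \oplus A_m + J$. If some $A_i \not\cong F$, then $M_2(F)$ embeds in $A_i$, so $UT_2(F) \in \V^{tr}(A)$. Otherwise each $A_i = F e_i$ for an idempotent $e_i$, and Theorem~\ref{trace algebras with PG} tells us that failure of polynomial growth means at least one of the following holds: (a) $\tr(J) \neq 0$; (b) $m \geq 2$ and some $\tr(e_i) \neq 0$; or (c) $m \geq 2$ and $e_i J e_k \neq 0$ for some $i \neq k$. In case (b), choose two idempotents $e_i, e_k$ with at least one non-zero trace; the subalgebra $F e_i + F e_k$ is a copy of $D_2$ endowed with the trace $t_{\tr(e_i),\tr(e_k)}$, which by Remark~\ref{isomorphic Dn} matches one of $t_{\alpha,\beta}$, $t_{\gamma,\gamma}$, $t_{\delta,0}$ according to whether the two scalars are distinct nonzero, equal nonzero, or exactly one is zero. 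In case (c), picking $x = e_i r e_k \neq 0$ with $r \in J$, the span $F e_i + F e_k + F x$ is closed under multiplication and realizes $UT_2(F)$ with the induced trace. In case (a), the plan is to select $j \in J$ with $\tr(j) \neq 0$ from an appropriately deep layer of the nilpotent filtration, pass to the commutative trace sub-quotient $F \cdot 1 + F \bar{j}$ with $\bar{j}^2 = 0$, and rescale $\bar{j}$ so that $\tr(\bar{j}) = 1$, obtaining $C_2^{t_{\epsilon, 1}}$ with $\epsilon = \tr(1_A)$.

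The main obstacle is case (a). It requires a careful choice of the radical element so that the resulting two-dimensional trace quotient genuinely sits inside $\V^{tr}(A)$ with the correct values on the identity and on the radical generator. The subtle point is guaranteeing that the ideal by which one quotients $j^2$ is itself a trace ideal, for which one should start from the deepest layer $J^{n-1}$ of the nilpotent filtration on which the trace is nonzero (using that elements of $J^{n-1}$ automatically square to zero when $J^n = 0$), correcting by lower-order terms if necessary so that $\tr(\bar{j}) \neq 0$. Both sub-cases $\tr(1_A) = 0$ and $\tr(1_A) \neq 0$ need to be checked, but both produce a $C_2^{t_{\epsilon,1}}$ after rescaling; the remaining case analysis is then bookkeeping against the Wedderburn--Malcev picture.
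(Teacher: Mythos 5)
The paper does not actually prove this theorem --- it is quoted from \cite{IoppoloKoshlukovLaMattina2021} --- so there is no in-paper proof to compare against, and I can only assess your argument on its own terms. Your architecture (read off the failure modes of the structural Theorem~\ref{trace algebras with PG} and realize each one by a listed algebra inside $\V^{tr}(A)$, together with the easy monotonicity direction) is reasonable, and cases (b) and (c) are essentially correct once you make explicit that in case (c) you may assume all $\tr(e_i)=0$ (otherwise case (b) already applies); that assumption is exactly what forces the induced trace on your copy $Fe_i+Fe_k+Fx$ of $UT_2(F)$ to be the zero trace demanded by the statement.

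Two steps have genuine gaps. First, ``if some $A_i\not\cong F$ then $M_2(F)$ embeds in $A_i$'' is false over a general field of characteristic zero: $A_i$ could be a proper field extension or a division algebra, which contains no copy of $M_2(F)$; you would need to pass to a splitting field and descend, or treat this case separately. Moreover, even when $A_i\cong M_{n_i}(F)$ with $n_i\ge 2$, the restriction of $\tr$ to $A_i$ is $\lambda$ times the usual matrix trace by Lemma~\ref{traces on matrices}, and this is \emph{not} zero on the upper triangular copy unless $\lambda=0$; for $\lambda\neq 0$ you must instead use the diagonal idempotents $e_{11}, e_{22}$ to produce $D_2^{t_{\lambda,\lambda}}$, and only for $\lambda=0$ do you obtain $UT_2(F)$ with zero trace. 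Second, case (a) is only sketched, and the phrase ``elements of $J^{n-1}$ automatically square to zero'' conflates the top layer of the radical filtration with the deepest layer on which the trace is nonzero. The clean fix is: let $l$ be maximal with $\tr(J^l)\neq 0$ and pick $u\in J^l$ with $\tr(u)\neq 0$; then $u^k\in J^{kl}\subseteq J^{l+1}$ for every $k\geq 2$, so $\tr(u^k)=0$, the span of the $u^k$ with $k\ge 2$ is an ideal of the commutative subalgebra $F1_A+Fu+Fu^2+\cdots$ on which the trace vanishes, the quotient is a two-dimensional algebra isomorphic to $C_2$ with trace $t_{\tr(1_A),\,\tr(u)}$, and rescaling $u$ yields $C_2^{t_{\epsilon,1}}$ with $\epsilon=\tr(1_A)$. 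With these repairs your proof goes through.
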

Here  $UT_2$ denotes the algebra of $2\times 2$ upper triangular matrices over $F$ endowed with zero trace. 

As a consequence of the above result, it is not difficult to see that the algebras in the previous theorem  are the only finite dimensional algebras with trace generating varieties of almost polynomial growth, that is  
varieties with the property that their codimension sequences grow exponentially  but any proper subvariety has polynomial growth.

In the following theorem we classify all the subvarieties of the varieties of almost polynomial growth  generated by unitary finite dimensional  algebras with non-zero trace.

\begin{Theorem} \label{classification of the subvarieties}
Let $A$ be a unitary finite dimensional algebra with trace $\tr$ over a field of characteristic zero. If $A \in \V^{tr}(B)$ where $B \in \{ D_2^{t_{\alpha', \beta}}, D_2^{t_{\gamma,\gamma}}, D_2^{t_{\delta,0}}, C_2^{t_{\epsilon,1}}\}$, for every choice of $\alpha'$, $\beta$, $\gamma$, $\delta \in F \setminus \{ 0 \}$, $\alpha' \ne \beta$, $\epsilon \in F$ then either $A\sim_{T^{tr}} B$ or
$A \sim_{T^{tr}} C_k^{t_{\alpha, 0}}$, for some $k \geq 1$ and $\alpha = \tr(1_A)$.
\end{Theorem}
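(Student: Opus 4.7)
The plan is to assume that $A \not\sim_{T^{tr}} B$ and push $A$ into the form of a commutative $F+J$ algebra with zero trace on the radical, at which point Lemma \ref{commutative pg alg are Ck t a 0} closes the argument. The first step is to invoke the almost polynomial growth of each candidate: the algebras $D_2^{t_{\alpha',\beta}}$, $D_2^{t_{\gamma,\gamma}}$, $D_2^{t_{\delta,0}}$, $C_2^{t_{\epsilon,1}}$ all generate varieties of almost polynomial growth (this follows from Theorem \ref{characterization} together with the results of Section 5), so any proper subvariety has a polynomially bounded trace codimension sequence. Consequently, if $\V^{tr}(A) \subsetneq \V^{tr}(B)$ then $c_n^{tr}(A)$ is polynomially bounded.

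Next I would apply Wedderburn--Malcev to the finite dimensional unitary algebra $A$ and combine it with Theorem \ref{trace algebras with PG}: one obtains $A = A_1 \oplus \cdots \oplus A_m + J$ with $A_i \cong F$, $\tr(J) = 0$, and either $m = 1$ or $m \geq 2$ with $\tr(A_i) = 0$ for every $i$ and $A_i J A_k = 0$ whenever $i \neq k$. Since each algebra $B$ in the list is commutative, $[x_1, x_2] \in \Id^{tr}(B) \subseteq \Id^{tr}(A)$, so $A$ is itself commutative, which is exactly the hypothesis needed to feed the decomposition into Lemma \ref{commutative pg alg are Ck t a 0}.

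The argument now splits according to $m$. If $m = 1$ then $A = F + J$ is a commutative unitary algebra with $\tr(J) = 0$, and Lemma \ref{commutative pg alg are Ck t a 0} immediately yields $A \sim_{T^{tr}} C_k^{t_{\alpha,0}}$ for some $k \geq 1$ with $\alpha = \tr(1_A)$. If instead $m \geq 2$, then $\tr(A_i) = 0$ for each $i$ together with $\tr(J) = 0$ forces $\tr \equiv 0$ on the whole of $A$, so both $[x_1, x_2]$ and $\Tr(x)$ are trace identities of $A$; modulo these two identities every element of $MT_n$ collapses to a scalar multiple of $x_1 \cdots x_n$, which is not an identity of $A$ since $1_A \neq 0$. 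Hence $\Id^{tr}(A) = \langle [x_1,x_2], \Tr(x) \rangle_{T^{tr}} = \Id^{tr}(C_1^{t_{0,0}})$, giving again the required form with $k = 1$ and $\alpha = \tr(1_A) = 0$.

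The step that will require the most care is the case $m \geq 2$: one has to verify that the second alternative in Theorem \ref{trace algebras with PG} genuinely wipes out the trace on all of $A$ (not merely on the semisimple part), and then confirm that the resulting $A$ is $T^{tr}$-equivalent to the degenerate boundary case $C_1^{t_{0,0}}$ of the $C_k^{t_{\alpha,0}}$ family. Once that bookkeeping is done, the remainder of the proof is a direct assembly of Theorem \ref{characterization}, Theorem \ref{trace algebras with PG}, and Lemma \ref{commutative pg alg are Ck t a 0}.
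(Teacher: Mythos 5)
Your proposal is correct and follows essentially the same route as the paper: pass from $A \not\sim_{T^{tr}} B$ to polynomial growth via almost polynomial growth of $\V^{tr}(B)$, apply Theorem \ref{trace algebras with PG} to get the decomposition $A = A_1 \oplus \cdots \oplus A_m + J$, note commutativity, and then split on $m=1$ (Lemma \ref{commutative pg alg are Ck t a 0}) versus $m\geq 2$ (zero trace, hence $A \sim_{T^{tr}} F = C_1^{t_{0,0}}$). Your extra justification in the $m \geq 2$ case, that the trace vanishes on all of $A$ and that $MT_n$ collapses to multiples of $x_1\cdots x_n$, only spells out what the paper leaves implicit.
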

\begin{proof}
If $A \sim_{T^{tr}} B$ there is nothing to prove. Hence let us suppose that $A$ generates a proper subvariety of $\V^{tr}(B)$. Since $A \in \V^{tr}(B)$, then $A$ is commutative and $c_n^{tr}(A)$, $n=1$, 2, \dots,  is polynomially bounded. By Theorem \ref{trace algebras with PG} we have that $ A = A_1 \oplus \cdots \oplus A_m + J$, $\tr(J) = 0$, $A_i \cong F$ and either $m=1$ or $\tr(A_i) = 0$, and $A_i J A_k = 0$, $1 \leq i,k \leq m$, $i \neq k$.
In the last case $A$ is a commutative algebra with zero trace and so $A \sim_{T^{tr}} F= C_k^{t_{0, 0}}$ and we are done.  
In the first case $A=F+J$ and the result follows from Lemma \ref{commutative pg alg are Ck t a 0}.
\end{proof}

We observe that in \cite{LaMattina2007, LaMattina2008} it was obtained the classification of all the subvarieties (not only generated by finite dimensional algebras) of $\V^{tr}(UT_2)$.  Hence this result completes the classification of the subvarieties of the varieties of almost polynomial growth.

\begin{Corollary} \label{subvarieties of D2 t a a}
Let $A$ be a unitary finite dimensional algebra with trace $\tr$ over a field of characteristic zero. If $\V^{tr}(A)\subsetneq \V^{tr}(D_2^{t_{\alpha, \alpha}})$, $\alpha \neq 0$, then either 
$A \sim_{T^{tr}}  C_1^{t_{\beta, 0}}$ with $\beta \in \{0, \alpha, 2\alpha \}$ or $A \sim_{T^{tr}} C_2^{t_{2 \alpha, 0}}$.
\end{Corollary}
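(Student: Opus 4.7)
The plan is to combine Theorem \ref{classification of the subvarieties} with targeted evaluations of the generator $f_3$ of $\Id^{tr}(D_2^{t_{\alpha,\alpha}})$ from Theorem \ref{identities of D2 talpha alpha}, applied first to the unit $1_A$ and then to elements of the radical, in order to pin down both $\tr(1_A)$ and the nilpotent structure of $A$.

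First, since $\V^{tr}(A)$ is a proper subvariety, $A \not\sim_{T^{tr}} D_2^{t_{\alpha,\alpha}}$, so Theorem \ref{classification of the subvarieties} applied with $B = D_2^{t_{\alpha,\alpha}}$ yields $A \sim_{T^{tr}} C_k^{t_{\gamma, 0}}$ for some $k \geq 1$ and $\gamma = \tr(1_A)$. The task reduces to determining which pairs $(k, \gamma)$ are admissible. Evaluating $f_3$ at $x_1 = x_2 = 1_A$ gives
$$\gamma^2 - 3\alpha\gamma + 2\alpha^2 = (\gamma - \alpha)(\gamma - 2\alpha) = 0,$$
so $\gamma \in \{\alpha, 2\alpha\}$ in the generic case, while $\gamma = 0$ enters the statement through the zero-trace branch of Theorem \ref{classification of the subvarieties} (i.e.\ when $\tr \equiv 0$ on $A$, giving $A \sim_{T^{tr}} C_1^{t_{0,0}}$).

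To control $k$, I would use Theorem \ref{trace algebras with PG} to write $A = F \cdot 1_A + J$ with $\tr(J) = 0$, and then substitute radical elements into $f_3$. With $x_1 = 1_A$ and $x_2 = j \in J$ every trace term containing $j$ vanishes and $f_3$ collapses to $(2\alpha^2 - \alpha\gamma)\, j$. For $\gamma = \alpha$ this forces $\alpha^2 j = 0$, hence $J = 0$ (as $\alpha \neq 0$), so $k = 1$ and $A \sim_{T^{tr}} C_1^{t_{\alpha, 0}}$. For $\gamma = 2\alpha$ this substitution vanishes identically, so I would probe further by taking $x_1 = j_1$, $x_2 = j_2 \in J$: all trace terms vanish and commutativity leaves $2\alpha^2 j_1 j_2$, forcing $J^2 = 0$, hence $k \leq 2$. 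This yields either $A \sim_{T^{tr}} C_1^{t_{2\alpha, 0}}$ (when $J = 0$) or $A \sim_{T^{tr}} C_2^{t_{2\alpha, 0}}$ (when $J \neq 0$).

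The only step that demands care is exhaustivity: the argument produces a list of candidate $T^{tr}$-classes, and one must verify that each one actually lies in $\V^{tr}(D_2^{t_{\alpha,\alpha}})$, i.e.\ that $f_3$ vanishes on $C_k^{t_{\beta, 0}}$ for each $(k, \beta)$ in the list. This amounts to a short direct substitution, with the verification for $C_2^{t_{2\alpha, 0}}$ being the only nontrivial one; once in hand, the trichotomy stated in the corollary is complete.
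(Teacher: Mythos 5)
Your proposal is correct and follows essentially the same route as the paper: Theorem \ref{classification of the subvarieties} reduces $A$ to some $C_k^{t_{\beta,0}}$, and evaluations of $f_3$ at $x_1=x_2=1_A$ and at radical elements force $\beta\in\{\alpha,2\alpha\}$ and $J^2=0$, exactly as in the paper's argument. The final ``exhaustivity'' verification you flag is not actually needed, since the corollary asserts only the forward implication.
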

\begin{proof}
By Theorem \ref{classification of the subvarieties} we know that 
$A \sim_{T^{tr}} C_k^{t_{\beta, 0}}$, for some $k \geq 1$ and $\beta = \tr(1_A)$.
If $\beta=0$ then $A\sim_{T^{tr}} C_1^{t_{0, 0}}$ and we are done.
So we assume that $\beta$ is different from zero. 

Since $A \in \V^{tr}(D_2^{t_{\alpha, \alpha}})$, we have that $A$ satisfies the trace identity
\[
f(x_1, x_2) = \Tr(x_1)\Tr(x_2) + \alpha^2 x_1x_2 + \alpha^2 x_2 x_1 - \alpha \Tr(x_1)x_2 - \alpha \Tr(x_2)x_1 - \alpha \Tr(x_1 x_2).
\] 
Hence, by considering the evaluation $x_1 = x_2 = 1_A$ we get that
\[
f(1_A, 1_A) = \left( \tr(1_A) \right)^2 - 3 \alpha \tr(1_A) + 2 \alpha^2 = 0
\]
and, so, either $\tr(1_A) = \alpha$ or $\tr(1_A) = 2 \alpha$.

Hence if $k=1$ we get that either $A \sim_{T^{tr}}  C_1^{t_{\alpha, 0}}$ or $A \sim_{T^{tr}}  C_1^{t_{2\alpha, 0}}$.

Now assume that $k>1$ and,
without loss of generality, that $A= C_k^{t_{\beta, 0}}$. We shall prove that $\tr(1_A) = 2\alpha$ and $k=2$.
Suppose that $\tr(1_A) = \alpha$. Then if $0\neq j\in J$ we get  $f(1_A, 1_A+j) = \alpha^2 j = 0$, a contradiction.

Hence $\tr(1_A) = 2 \alpha$. Moreover, for every $j$, $k \in J$ we have 
\[
f(j,k) = 2 \alpha^2 jk = 0.
\]
It follows that $J^2 = 0$ and therefore $k=2$. This completes the proof.
\end{proof}

\end{document}